\newtheorem{theorem}{Theorem}
\newtheorem{corollary}[theorem]{Corollary}
\newtheorem{lemma}[theorem]{Lemma} 
\newtheorem{proposition}[theorem]{Proposition}
\theoremstyle{plain}
\theoremstyle{definition}
\newtheorem*{xrem}{Remark}
\def\mod{\text{mod }}
\newcommand{\GQ}{\operatorname{G}_\QQ}
\newcommand{\Gal}{\operatorname{Gal}(\overline{\mathbb Q}/\mathbb Q)}
\newcommand{\ov}{\overline}
\newcommand{\ZZ}{\mathbb{Z}}
\newcommand{\NN}{\mathbb{N}}
\newcommand{\QQ}{\mathbb{Q}}
\newcommand{\FF}{\mathbb{F}}
\begin{document}

\title{On a class of generalized Fermat equations of signature $(2,2n,3)$} 

\author{Karolina Cha\l upka,  Andrzej D\k{a}browski  and G\"okhan Soydan} 

\date{\today}

\maketitle

{\it Abstract}. 
We consider the Diophantine equation $7x^{2} + y^{2n} = 4z^{3}$. We determine all
solutions to this equation for $n = 2, 3, 4$ and $5$. We formulate a Kraus type criterion for
showing that the Diophantine equation $7x^{2} + y^{2p} = 4z^{3}$ has no non-trivial
proper integer solutions for specific primes $p > 7$. We computationally verify
the criterion for all primes $7 < p < 10^9$, $p \neq 13$. We use the symplectic
method and quadratic reciprocity to show that the Diophantine equation
$7x^{2} + y^{2p} = 4z^{3}$ has no non-trivial proper solutions for a positive proportion
of primes $p$. In the paper \cite{ChDS} we consider the Diophantine equation $x^{2} +7y^{2n} = 4z^{3}$, 
determining all families of solutions for $n=2$ and $3$, as well as giving a (mostly) conjectural description 
of the solutions for $n=4$ and primes $n \geq 5$.

\bigskip

Key words: Diophantine equation, modular form, elliptic curve, Galois representation, 
Chabauty method

\bigskip

2010 Mathematics Subject Classification: 11D61, 11B39

\section{Introduction}

Fix nonzero integers $A$, $B$ and $C$.   For given positive integers $p$, $q$, $r$ 
satisfying $1/p + 1/q + 1/r < 1$, the generalized Fermat equation 
\begin{equation} \label{basicFermat}  
Ax^p + By^q = Cz^r 
\end{equation} 
has only finitely many primitive integer solutions.  
Modern techniques coming from Galois representations and modular forms 
(methods of Frey--Helle-gouarch curves and variants of Ribet's level-lowering 
theorem, and of course, the modularity of elliptic curves or abelian varieties over the rationals 
or totally real number fields) allow to give partial (sometimes complete) 
results concerning the set of solutions to \eqref{basicFermat} (usually, when  
a radical of $ABC$ is small), at least when 
$(p,q,r)$ is of the type $(n,n,n)$, $(n,n,2)$, $(n,n,3)$,  $(2n,2n,5)$, $(2,4,n)$, 
$(2,6,n)$, $(2,n,4)$,  $(2,n,6)$, $(3,3,p)$,  $(2,2n,3)$, $(2,2n,5)$.  
Recent papers by Bennett, Chen, Dahmen and Yazdani \cite{BCDY} 
and by Bennett, Mih\v{a}ilescu and Siksek \cite{BMS} survey approaches to solving 
the equation  \eqref{basicFermat} when $ABC=1$. 
When $1/p + 1/q + 1/r$ is close to one (for instance, for $(p,q,r) = (2,3,5)$, $(2,3,7)$ or $(2,3,8)$), 
then one needs new methods (Chabauty method or its refinements \cite{Br0,Br,Br2}, 
or a combination of Chabauty type method with a modular approach  \cite{FNS,PSS}).

The Diophantine equation 
\begin{equation} \label{Chen-Dahmen} 
x^2+y^{2n}=z^3 
\end{equation} 
was studied by Bruin \cite{Br}, Chen \cite{Ch},   Dahmen \cite{Da},   and  Bennett, Chen, Dahmen 
and Yazdani \cite{BCDY}.  It is known that the equation \eqref{Chen-Dahmen} has no 
solutions for a family of $n$'s of natural density one. Moreover,  a Kraus type criterion is 
known which allows to check non-existence of solutions for all exponents $n$ up to 
$10^7$ (or more).  Let us also mention that  nonexistence of solutions for $n = 7$ 
follows from a more general result of Poonen, Schaefer and Stoll \cite{PSS}.

One of our motivations was to extend the above results (and methods) of Bruin, Chen 
and Dahmen, by considering some Diophantine equations $Ax^2 + By^{2n} = Cz^3$ 
with $(A,B,C)$'s different from $(1,1,1)$   
(assuming for simplicity that the class number of $\mathbb{Q}(\sqrt{-AB})$ is one).

Another motivation was to extend our previous results in the case $k = 3$. To be precise, 
given odd, coprime integers $a$, $b$ ($a>0$),  we 
consider the Diophantine equation 
\begin{equation}\label{maineq0} 
ax^2+b^{2n}=4y^k,   \quad  x, y\in \ZZ,\,\, n, k \in \NN, \, k \, odd \, \, prime,  \gcd(x,y)=1. 
\end{equation} 
The equation (\ref{maineq0}) was completely solved in \cite{DGS} 
for $a\in\{7,11,19,43,67,163\}$, and $b$ a power of an odd prime, 
under the conditions  $2^{k-1}b^n\not\equiv \pm 1\pmod a$ and $\gcd(k,b)=1$. 
The paper \cite{CHS} extends these results by removing the first of these 
assumptions, namely that $b$ is an odd prime. 
In this paper we fix $k=3$, but $b$ is arbitrary.

Let us briefly explain why we were unable to handle the Diophantine equations 
$7x^2 + y^{2n+1} = 4z^3$ and $x^2 + 7y^{2n+1} = 4z^3$. 
In \cite{PSS}, Poonen, Schaefer and Stoll find the primitive integer solutions to 
$x^2 + y^7 = z^3$.  Their method combine the modular method together 
with determination of rational points on certain genus-$3$ algebraic curves. 
This case (and possible generalizations to $Ax^2 + By^7 = Cz^3$) is  
very difficult (as explained by the authors in the Introduction to \cite{PSS}). 
Freitas, Naskrecki, Stoll \cite{FNS} considered a general Diophantine equation $x^2 + y^p = z^3$  
(with $p$ any prime $> 7$).  They follow and refine the arguments of \cite{PSS} by 
combining new ideas around the modular method with recent approaches 
to determination of the set of rational points on certain algebraic 
curves. As a result, they were able to find (under GRH) the complete 
set of solutions of the Diophantine equation $x^2 + y^p = z^3$ 
only for $p=11$.

\bigskip 

It is the aim of this paper (and the next one \cite{ChDS}) to consider the Diophantine equations  
\begin{equation}\label{maineq} 
ax^2+y^{2n}=4z^3,    \quad  x, y, z\in \ZZ,\,\,      \gcd(x,y)=1, \,\,   n \in \NN_{\geq 2}, 
\end{equation} 
and 
\begin{equation}\label{maineq2} 
x^2+ay^{2n}=4z^3,    \quad  x, y, z\in \ZZ,\,\,      \gcd(x,y)=1, \,\,   n \in \NN_{\geq 2}, 
\end{equation} 
where the class number of $\mathbb{Q}(\sqrt{-a})$  with $a\in\{7,11,19,43,67,163\}$ is 1.
An easy observation (see Corollary \ref{trivial}) is that we may only have solutions for $a=7$. 
Hence, below we will treat in some detail the equation (\ref{maineq}) for $a=7$ (the equation 
(\ref{maineq2})  for $a=7$ will be treated in our next paper \cite{ChDS}). 

We show that the Diophantine equation $7x^2 + y^{2n} = 4z^3$ 
has no nontrivial solutions for $n =3,4,5$ (using Chabauty method 
for $n =5$ [Theorem \ref{xx}], 
and calculating the Mordell-Weil group of  the corresponding elliptic curve for $n = 3$ 
[Theorem \ref{2-6-3}]).  
In Section \ref{fixed p} we will describe a Kraus type criterion  (Theorem  \ref{KrausCrit2}),  
and its refinement  (Theorem \ref{KrausRefin})
for the first equation. The computational verification of the criteria for
 all primes $7 < p < 10^9$ gives the following result (Theorem \ref{2-2p-3 Kraus}):

 \begin{theorem}\label{thm1}
The Diophantine equation $7x^2+y^{2p}=4z^3$ has no primitive solutions 
for all primes $11\leq p < 10^9$, $p\not = 13$. 
\end{theorem}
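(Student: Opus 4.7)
The plan is to invoke the two Kraus–type criteria (Theorem \ref{KrausCrit2} and its refinement Theorem \ref{KrausRefin}) stated earlier, and to reduce the theorem to a finite computational verification performed prime-by-prime over the range $11\le p<10^9$, $p\ne 13$. These criteria, in the spirit of Kraus's original approach for the Fermat equation, are expected to take the following form: given a prime $p$ and a hypothetical primitive solution $(x,y,z)$ to $7x^2+y^{2p}=4z^3$, the attached Frey–Hellegouarch curve over $\QQ$ gives, after modularity and Ribet-style level-lowering, a mod-$p$ Galois representation isomorphic to $\bar\rho_{f,\mathfrak{p}}$ for some newform $f$ in one of finitely many predicted levels. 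Choosing an auxiliary prime $\ell=2kp+1$ then constrains $a_\ell(f)\bmod p$ to lie in a small, explicit finite set $S(\ell,p)$ determined by the possible values of $a_\ell$ of the Frey curve; if $a_\ell(f)\bmod p\notin S(\ell,p)$ for \emph{every} candidate $f$, then $p$ is eliminated.

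With this in place, the proof reduces to the algorithm: loop over primes $11\le p<10^9$ (skipping $p=13$); for each $p$, enumerate auxiliary primes $\ell=2kp+1$ with $k=1,2,3,\dots$ until one of them eliminates $p$ via Theorem \ref{KrausCrit2}. If the basic criterion fails for all small $\ell$, fall back on the sharper Theorem \ref{KrausRefin}, which should incorporate additional local information (for instance further congruences on $y$, $z$ modulo $\ell$, or twists of the Frey curve) and so shrink the allowed set $S(\ell,p)$. Because the candidate set of newforms at each predicted level is computed once and for all (independently of $p$), the per-prime cost consists essentially of computing the Frobenius traces of a finite list of elliptic curves at the chosen $\ell$, together with the already-tabulated $a_\ell(f)$'s.

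Two subtleties need attention. First, the criteria must be stated so that \emph{trivial} or \emph{non-proper} solutions are correctly excluded from the hypothesis, matching the notion of primitive solution in the statement; this should be handled directly by Theorems \ref{KrausCrit2} and \ref{KrausRefin}. Second, the prime $p=13$ is known to be exceptional (it is excluded from the theorem), so the algorithm must recognize this gracefully: for $p=13$, no choice of $k$ should produce an $\ell$ that eliminates $p$, which is consistent with the expectation that at $p=13$ the trace conditions are satisfied by some newform and the method genuinely does not apply.

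The main obstacle is the scale of the computation: roughly $5\times 10^7$ primes must be processed, and for each one a suitable auxiliary $\ell$ has to be found. The hard part is therefore engineering, not mathematics: the implementation has to search $k$ efficiently, precompute the newform Hecke eigenvalues modulo small primes, and exploit symmetries so that the Frey-curve traces $a_\ell(E_{u,v})$ at each prime $\ell$ can be amortized over all Frey curves of a given conductor. Once the criteria are in place, the proof of Theorem \ref{thm1} is simply the statement that the algorithm outlined above, when run to completion, returns "eliminated" for every prime in the range except $p=13$.
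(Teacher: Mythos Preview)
Your overall picture of the Kraus-type computational loop is correct, and matches how the paper eliminates primes for the equation $3^{2p-3}\alpha^{2p}-4\beta^p=7v^2$ (equation~\eqref{Kraus2}). However, there is a structural gap: Theorems~\ref{KrausCrit2} and~\ref{KrausRefin} are \emph{not} criteria for the original equation $7x^2+y^{2p}=4z^3$, but only for the subcase~\eqref{Kraus2}. The paper first reduces (via Lemma~\ref{lem.1}) to $4y^p=u(u^2-21v^2)$ and then splits according to $d=\gcd(u,u^2-21v^2)\in\{1,3\}$, yielding two distinct ternary equations~\eqref{BS1} and~\eqref{Kraus2}, each with its own Frey curve and its own set of target newforms at different levels. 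Your proposal treats the problem as if there were a single Frey construction, which is not the case.

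The case $d=1$ (equation~\eqref{BS1}) is not handled by a Kraus sweep at all for most $p$: Proposition~\ref{prop. no solution if d=1} disposes of it for every prime $p\geq 11$ with $p\neq 19$ by direct Bennett--Skinner elimination of newforms at levels $1764$ and $3528$, with no auxiliary prime search. The single leftover $p=19$ requires its own Kraus argument (Proposition~\ref{Kraus method for p=19}) against curves at level $1764$, separate from the $588/1176$ curves appearing in Theorem~\ref{KrausCrit2}. Your plan omits both of these ingredients; without them the proof is incomplete even granting the full computation for~\eqref{Kraus2}.

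Two smaller points. First, the auxiliary prime in the paper is $q=kp+1$, not $2kp+1$; the criterion does not require $q\equiv 1\pmod{2p}$. Second, the refinement in Theorem~\ref{KrausRefin} is not a generic tightening of local congruences but a specific factorisation of $u^2-21v^2$ in $\ZZ[\tfrac{1+\sqrt{21}}{2}]$, which cuts the test set from $A_{k,q}$ down to a much smaller $S_{k,q}\cup S'_{k,q}$; in practice it is invoked only to rescue $p=17$ (where the basic criterion fails for $F=1176G1$), while $p=13$ resists both criteria.
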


Let us briefly explain why the cases $p = 7,13$ are omitted in the statement. It seems that the only available 
method (at present) for treating the Diophantine equation $7x^2+y^{14}=4z^3$ 
is to follow the methods of \cite{PSS} (the modular methods used in our article are not sufficient).  
On the other hand, it seems possible that further 
improvements of the Kraus type criterion (as in sections \ref{section on Kraus method}, \ref{refinement} and \ref{19}) 
may allow to treat the remaining case $p=13$, but we were unsuccessful.

We also use the symplectic method and quadratic reciprocity to show that the Diophantine 
equation $7x^2+y^{2p}=4z^3$ has no non-trivial proper solutions for a positive 
proportion of primes $p$ (Theorem \ref {2-2p-3 congr}): 

\begin{theorem}\label{thm2}
The Diophantine equation $7x^2+y^{2p}=4z^3$ has no primitive solutions 
for  a family of primes $p$ satisfying:
\[p\equiv 3 \text{ or } 55 \pmod {106}\quad \text{or}  \quad p\equiv 47,65,113,139,143 \text{ or } 167 \pmod {168}.
\]
\end{theorem}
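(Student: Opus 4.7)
Assume for contradiction that $(x,y,z)$ is a non-trivial primitive integer solution to $7x^2+y^{2p}=4z^3$ for $p$ in one of the stated congruence classes. The modular set-up of Section~\ref{fixed p} attaches to this solution a Frey--Hellegouarch elliptic curve $E/\mathbb{Q}$, and Ribet-style level lowering yields an isomorphism of mod-$p$ Galois representations $\bar\rho_{E,p}\cong \bar\rho_{E',p}$, where $E'$ ranges over a pre-computed finite list of elliptic curves over $\mathbb{Q}$ (those attached to rational newforms surviving level lowering). The plan is to rule out each candidate $E'$ for $p$ in the relevant residue class by showing that no $G_{\mathbb{Q}}$-isomorphism between $\bar\rho_{E,p}$ and $\bar\rho_{E',p}$ can be simultaneously symplectic at one auxiliary prime and anti-symplectic at another.

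The main tool is the Kraus--Halberstadt--Freitas symplectic criterion: at a prime $\ell\neq p$ of common multiplicative reduction for $E$ and $E'$, the isomorphism is symplectic iff $v_\ell(\Delta_E)\cdot v_\ell(\Delta_{E'})$ is a square modulo $p$. The valuation $v_\ell(\Delta_E)$ is read off the explicit Frey model and, after the case split on the $2$-adic behaviour of $(x,y,z)$ already carried out in the Kraus analysis, collapses to a concrete small integer; $v_\ell(\Delta_{E'})$ is simply tabulated. Comparing the verdicts at two suitably chosen auxiliary primes $\ell_1,\ell_2$ forces a Legendre-symbol identity $\left(\tfrac{\alpha_1}{p}\right)=\left(\tfrac{\alpha_2}{p}\right)$ with $\alpha_1,\alpha_2$ explicit small integers, which via quadratic reciprocity and the supplementary laws becomes a congruence on $p$ modulo an explicit divisor of $4|\alpha_1\alpha_2|$. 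I expect the modulus $106=2\cdot 53$ to be forced by a candidate $E'$ whose minimal discriminant contains $53$ to an odd power coprime to $p$, and $168=2^3\cdot 3\cdot 7$ to reflect the small bad primes $\{2,3,7\}$ inherent in the Frey model; collecting the contradictory residues over the various $(E,E')$-pairs should yield exactly the eight classes listed in the statement.

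The principal obstacle is the bookkeeping: one must enumerate precisely which newforms of the pre-computed level are rational (so that the target of the mod-$p$ Galois representation is genuinely $2$-dimensional over $\mathbb{F}_p$ and the classical symplectic criterion applies), and for each surviving $E'$ identify the common primes of multiplicative reduction, whose list depends on the Kraus case split. A secondary technical point is that when $\ell$ is additive for either curve the bare discriminant criterion must be replaced by the refined conductor-exponent/Tate-parameter version of Freitas, and at $\ell=2,3$ a local quadratic-twist argument may be needed to reach a multiplicative model. Once these ingredients are assembled, extracting the eight residue classes is a finite Legendre-symbol exercise.
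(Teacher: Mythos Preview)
Your outline captures the symplectic half of the argument in spirit, but it misidentifies both the mechanism and the source of the modulus $106$, and it oversimplifies the symplectic input for the modulus $168$.

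For the mod $168$ classes the paper does essentially what you describe, but the key local comparison is \emph{not} at a prime of common multiplicative reduction. Both the Frey curve $E$ and each surviving $F\in\{588C,588E,1176G,1176H\}$ have potentially good additive reduction at $7$ with semistability defect $e=4$; one applies the Freitas--Kraus additive criterion (their Theorem~5) at $\ell=7$, which forces $\bigl(\tfrac{7}{p}\bigr)=\bigl(\tfrac{2}{p}\bigr)=1$ for the isomorphism to be symplectic. This is then played off against the Kraus--Oesterl\'e multiplicative criterion at $\ell=3$, giving $\bigl(\tfrac{-3}{p}\bigr)=-1$. Your plan to use only the multiplicative-reduction discriminant criterion would not see the prime $7$ at all, so you would not recover the modulus $168$.

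The mod $106$ classes do \emph{not} arise from any symplectic comparison, and there is no candidate $E'$ with $53$ in its discriminant: the surviving curves all have conductor dividing $1176=2^3\cdot 3\cdot 7^2$. The relevant prime is $107$, and $106=107-1$ is the exponent of $\mathbb{F}_{107}^\times$. The paper rewrites $3^{2p-3}X^{2p}-4Y^p=7Z^2$ as $A^2+\tfrac{107}{189}B^{2p}=-\tfrac{4}{7}(C^p-B^{2p})$ by adding $4B^{2p}$ and dividing by $7$, and then applies the Hilbert-symbol product formula (Lemma~\ref{dioph eq and Hilbert symb}, from \cite{BCDY}) to force $\bigl(\tfrac{-7(C-B^2)}{107}\bigr)=1$. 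Independently, the congruence $a_{107}(E)\equiv a_{107}(F)\pmod p$ pins $\overline{C^p}/\overline{B^{2p}}$ to an explicit $8$-element subset of $\mathbb{F}_{107}$, and one checks by hand for which residues of $p$ modulo $106$ every $\zeta$ with $\zeta^p$ in that set violates the Legendre condition. This is the ``quadratic reciprocity'' half of the proof advertised in the theorem's preamble; your proposal conflates it with the symplectic method and would not produce the prime $107$ or the modulus $106$ at all.
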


\bigskip 

The article is structured as follows. 

In the preliminary Section \ref{sec.2} we show that, if $a\in\{11,19,43,67,163\}$, then 
the Diophantine equations  (\ref{maineq}) and (\ref{maineq2}) have no solutions. 

In Section \ref{sec.3} we will determine all families of solutions to the  
 Diophantine equation $7x^2+y^4=4z^3$ 
(variant of Zagier's result in the case $x^2+y^4=z^3$ \cite{Beu,Br}). 
Detailed proof of this result is given in the Appendix A. 

In Section \ref{sec.4} we completely solve the Diophantine equations $7x^2+y^6=4z^3$. 

In Section \ref{sec.5}  we completely solve the Diophantine equation $7x^2+y^8=4z^3$.

In Section \ref{modular approach} we use the modular approach to the Diophantine equation 
$7x^2+y^{2p}=4z^3$, with $p\geq 7$ a prime. The main results of this section are crucial for 
the proofs of Theorems \ref{thm1} and \ref{thm2}.
The Appendix B contains the proof of Lemma \ref{eliminating}.

In Section \ref{sec.7} we completely solve the Diophantine equation $7x^2+y^{10}=4z^3$, and comment on  the Diophantine equation $7x^2+y^{14}=4z^3$. 

In Section \ref{fixed p} we formulate a Kraus type criterion (actually, two criteria) for showing that this equation 
has no non-trivial proper integer solutions for specific primes $p > 7$. 
We computationally verify the criterion for all primes $7 < p < 10^9$, $p\not= 13$.  
The Appendix C (resp. D) contains proof of Corollary \ref{CorKrausCrit2}  (resp. \ref{corKrausRefin}).

In Section \ref{no solution for infinitely many p's} 
we use the symplectic method and quadratic reciprocity to show that the Diophantine 
equation $7x^2+y^{2p}=4z^3$ has no non-trivial proper solutions for a positive proportion of primes $p$.

\bigskip 

{\bf Acknowledgements.} We would like to thank Professor Frits Beukers for sending 
us his paper \cite{Beu},  and Professors Nils Bruin and Michael Stoll for providing us 
with some suggestions and references concerning Chabauty methods. 
We would also like to thank Professor Mike Bennett and Dr. Paul Voutier for useful 
comments concerning our results. 
We also cordially thank an anonymous referee for the comments and suggestions 
(the idea of splitting the original version into two papers) which allow to improve the final version. 
The third author was supported by the Research Fund of Bursa Uluda\u{g} University 
under Project No: F-2020/8.

\section{Preliminaries}\label{sec.2}  

\bigskip 

As the class number of $\mathbb Q(\sqrt{-a})$ with $a\in\{7,11,19,43,67,163\}$ is $1$, 
we have the following factorization for the left hand side of (\ref{maineq}) 

\[
\frac{y^n+x\sqrt{-a}} 2 \cdot \frac{y^n-x\sqrt{-a}} 2 = z^3. 
\]
Now we have 

\[
\frac{y^n+x\sqrt{-a}} 2 = \left(\frac{u+v\sqrt{-a}} 2\right)^3,  
\] 
where $u$, $v$ are odd rational integers. Note that necessarily $\gcd(u,v)=1$. 
Replacing $x$ with $y^n$, we obtain a similar factorization for the 
left hand side of (\ref{maineq2}). 
Equating the real and imaginary parts, we obtain the following result.

\begin{lemma}  \label{lem.1}
(a) Suppose that $(x,y,z)$ is a solution to (\ref{maineq}). Then 
\begin{equation} 
(x,y^n,z) = \left(\frac{v(3u^2-av^2)} 4, \frac{u(u^2-3av^2)} 4, \frac{u^2+av^2} 4 \right) 
\end{equation} 
for some odd $u,v\in\mathbb Z$ with $\gcd(u,v)=1$, $uv\not=0$. 

(b) Suppose that $(x,y,z)$ is a solution to (\ref{maineq2}). Then 
\begin{equation} 
(x,y^n,z) = \left(\frac{u(u^2-3av^2)} 4, \frac{v(3u^2-av^2)} 4, \frac{u^2+av^2} 4 \right) 
\end{equation} 
for some odd $u,v\in\mathbb Z$ with $\gcd(u,v)=1$, $uv\not=0$. 
\end{lemma}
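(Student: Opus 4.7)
My plan follows the standard Kummer descent for signature $(2,2n,3)$ equations. Since $\QQ(\sqrt{-a})$ has class number one by hypothesis and all six listed values of $a$ satisfy $a\equiv 3\pmod 4$, the ring of integers is $\ZZ[\omega]$ with $\omega=(1+\sqrt{-a})/2$, whose only units are $\pm 1$. A short parity argument---reducing $ax^2+y^{2n}\equiv 0\pmod 4$ while using $\gcd(x,y)=1$ and $a\equiv 3\pmod 4$---forces both $x$ and $y$ to be odd, and in particular $\alpha:=(y^n+x\sqrt{-a})/2$ lies in $\ZZ[\omega]$. Rewriting (\ref{maineq}) as $\alpha\bar\alpha=z^3$, the strategy is to show $\alpha$ is a cube and then extract $u,v$ by expansion.

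The key step is the coprimality of $\alpha$ and $\bar\alpha$ in $\ZZ[\omega]$. Any common divisor divides both $\alpha+\bar\alpha=y^n$ and $\alpha-\bar\alpha=x\sqrt{-a}$; since $\gcd(x,y)=1$, the ideals $(x)$ and $(y^n)$ are coprime in $\ZZ[\omega]$, so any common divisor must lie in the ramified prime $\mathfrak p=(\sqrt{-a})$. However $\mathfrak p\mid\alpha$ would force $a\mid y$, and feeding $y=ay_1$ back into (\ref{maineq}) yields successively $a\mid z$ and $a\mid x$, contradicting $\gcd(x,y)=1$. Hence unique factorization gives $\alpha=\pm\beta^3$ for some $\beta\in\ZZ[\omega]$, and since $\pm 1$ are both cubes we may write $\alpha=\beta^3$.

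Writing $\beta=(u+v\sqrt{-a})/2$ with $u\equiv v\pmod 2$, expanding and matching real and imaginary parts of $\alpha=\beta^3$ yields the claimed formulas for $y^n$ and $x$; the value $z=(u^2+av^2)/4$ follows from $z^3=N(\alpha)=N(\beta)^3$. Both $u,v$ must be odd, since $u,v$ both even would make $y^n$ even, contradicting $y$ odd; this also forces $uv\ne 0$. Coprimality $\gcd(u,v)=1$ follows because any common prime would divide both $x$ and $y^n$ after expansion. Part (b) is completely analogous, starting instead from $\alpha=(x+y^n\sqrt{-a})/2$. The only genuinely delicate point to verify carefully is the coprimality of $\alpha$ and $\bar\alpha$, specifically ruling out the ramified prime above $a$; the rest is bookkeeping.
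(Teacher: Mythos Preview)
Your proposal is correct and follows essentially the same approach as the paper: factor the left-hand side in $\ZZ[\omega]$, use class number one and the trivial unit group to deduce that $(y^n+x\sqrt{-a})/2$ is a cube, then expand and compare parts. The paper's proof (the few lines immediately preceding the lemma) is very terse and simply asserts the cube factorisation, whereas you have supplied the details the paper omits---the parity argument forcing $x,y$ odd, the coprimality of $\alpha$ and $\bar\alpha$ (including the ramified prime above $a$), and the verification that $u,v$ are odd and coprime.
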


\begin{corollary} \label {trivial} 
If $a\in\{11,19,43,67,163\}$, then the Diophantine equations  (\ref{maineq}) and 
(\ref{maineq2}) have no solutions. 
\end{corollary}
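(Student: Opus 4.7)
The plan is to extract a $2$-adic obstruction from the parametrization provided by Lemma \ref{lem.1}, exploiting the arithmetic coincidence that every $a\in\{11,19,43,67,163\}$ lies in the residue class $3\pmod 8$, while $7\equiv 7\pmod 8$.

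First I would observe that in any primitive solution $(x,y,z)$ to either (\ref{maineq}) or (\ref{maineq2}) with $a$ odd, both $x$ and $y$ must be odd: if $y$ were even then $4\mid y^{2n}$ would force $4\mid ax^2$, and since $a$ is odd this gives $2\mid x$, contradicting $\gcd(x,y)=1$; and once $y$ is odd, $x$ even would render the left-hand side odd while $4z^3$ is even. Next I would apply Lemma \ref{lem.1}(a), writing $y^n = u(u^2-3av^2)/4$ with $u,v$ odd. Since $u^2\equiv v^2\equiv 1\pmod 8$ and $3a\equiv 1\pmod 8$ for each $a\in\{11,19,43,67,163\}$, one finds $u^2-3av^2\equiv 0\pmod 8$. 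Hence $(u^2-3av^2)/4$ is even and therefore $y^n$ is even, contradicting the parity of $y$ established above. For (\ref{maineq2}), Lemma \ref{lem.1}(b) gives $y^n = v(3u^2-av^2)/4$, and the identical analysis applies since $3u^2-av^2\equiv 3-a\equiv 0\pmod 8$.

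The ``hard part'' here is essentially nonexistent: once the parametrization of Lemma \ref{lem.1} is in hand, the argument reduces to a clean mod-$8$ check. The content of the corollary is really the observation that $a=7$ is the unique element of $\{7,11,19,43,67,163\}$ whose residue modulo $8$ makes the relevant factor $(u^2-3av^2)/4$ (or $(3u^2-av^2)/4$) odd, which is precisely why the case $a=7$ must be treated by the deeper methods developed in the remainder of the paper.
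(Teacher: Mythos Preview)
Your argument is correct and is essentially the paper's own proof: both reduce, via Lemma~\ref{lem.1}, to the equation $u(u^2-3av^2)=4y^n$ (resp.\ $v(3u^2-av^2)=4y^n$) and observe that for $a\equiv 3\pmod 8$ the left side is $\equiv 0\pmod 8$ while the right side is $\equiv 4\pmod 8$. Your extra paragraph verifying that $x$ and $y$ are odd in any primitive solution is a detail the paper leaves implicit but is otherwise identical in spirit.
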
 

\begin{proof}
We are reduced  to the equation $u(u^2-3av^2)=4y^n$ 
or  $v(3u^2-av^2)=4y^n$. Now, if 
$a\in\{11,19,43,67,163\}$, then $u(u^2-3av^2)$ is congruent to $0$ modulo $8$, 
while $4y^n$ is congruent to $4$ modulo $8$, a contradiction. Similarly in the 
second case. 
\end{proof}

\bigskip 

In what follows, we will consider the equations (\ref{maineq}) and (\ref{maineq2}) 
for $a=7$. From Lemma \ref{lem.1} we obtain infinitely many solutions with $n=1$. 
Below we will treat the cases $n\in\{2,3,4\}$ and $n\geq 5$ a prime, 
separately.

\section{The Diophantine equation $7x^2+y^4=4z^3$}\label{sec.3}

In this section we will determine all families of solutions to the title equation (variants of 
Zagier's result in the case $x^2+y^4=z^3$ \cite{Beu,Br}).

\begin{theorem} \label{2-4-3A} 

Let $x$, $y$, $z$ be coprime integers such that $7x^2+y^4=4z^3$. 
Then there are rational numbers $s$, $t$ such that one of the following holds.

\begin{equation}\label{family-(v)1} 
\begin{aligned}
&x = \pm  (1911s^4 + 1260ts^3 + 378t^2s^2 + 12t^3s + 7t^4) \\ 
& \qquad (-5078115s^8 - 11928168ts^7 - 2556036t^2s^6 - 1802808t^3s^5  \\ 
& \qquad    - 929922t^4s^4 - 38808t^5s^3 + 46620t^6s^2 + 9912t^7s + 461t^8), \\  
&y =  \pm 3  (21s^2-14ts-3t^2)  (2499s^4 + 1764ts^3 + 378t^2s^2 + 84t^3s - 5t^4),\\  
&z =  6828444s^8 + 7260624ts^7 + 6223392t^2s^6 + 1728720t^3s^5  \\  
&  \qquad  + 156408t^4s^4  + 49392t^5s^3 + 28224t^6s^2 + 3696t^7s + 268t^8, 
 \end{aligned} 
\end{equation}

\bigskip 

\begin{equation}\label{family-(v)2} 
\begin{aligned}
&x = \pm   (343s^4 - 84ts^3 + 378t^2s^2 - 180t^3s + 39t^4)  \\ 
&  \qquad  (1106861s^8 - 3399816ts^7 + 2284380t^2s^6 + 271656t^3s^5 \\ 
&  \qquad   - 929922t^4s^4 + 257544t^5s^3 - 52164t^6s^2 + 34776t^7s - 2115t^8), \\ 
&y = \pm 3  (21s^2-14ts-3t^2)  (-245s^4 - 588ts^3 + 378t^2s^2 - 252t^3s + 51t^4),\\  
&z =  643468s^8 - 1267728ts^7 + 1382976t^2s^6 - 345744t^3s^5 + 156408t^4s^4  \\  
&  \qquad  - 246960t^5s^3 + 127008t^6s^2 - 21168t^7s + 2844t^8.   
 \end{aligned} 
\end{equation}

\end{theorem}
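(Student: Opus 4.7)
The plan is to start from Lemma \ref{lem.1}(a) specialized to $a = 7$, $n = 2$: every coprime solution $(x, y, z)$ arises from odd coprime integers $(u, v)$, with $uv \neq 0$, satisfying $y^2 = u(u^2 - 21 v^2)/4$, while $x$ and $z$ are determined by the formulas in the lemma. The task thus reduces to classifying such pairs $(u, v)$, subject to $\gcd(x, y) = 1$.

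First, the coprimality forces $7 \nmid u$: otherwise $7 \mid y$ from $y^2 = u(u^2 - 21v^2)/4$ and $7 \mid x$ from $4x = v(3u^2 - 7v^2)$, contradicting $\gcd(x,y) = 1$. Setting $d = \gcd(u, (u^2 - 21v^2)/4)$, one has $d \mid 21$ (since $\gcd(u, v) = 1$), so $d \in \{1, 3\}$. Writing $u = \varepsilon d \alpha^2$, $(u^2 - 21v^2)/4 = \varepsilon d \beta^2$ with common sign $\varepsilon \in \{\pm 1\}$ and $\gcd(\alpha, \beta) = 1$, substitution gives
\[
d \alpha^4 - 4 \varepsilon \beta^2 = \frac{21}{d} v^2.
\]
For $d = 1, \varepsilon = 1$, the factorization $(\alpha^2 - 2\beta)(\alpha^2 + 2\beta) = 21 v^2$ (coprime factors since $\alpha$ is odd) reduces, upon distributing $21$, to conics $2\alpha^2 = c_1 \gamma^2 + c_2 \delta^2$ with $\{c_1, c_2\} \in \{\{1,21\}, \{3,7\}\}$; each has no nontrivial rational point by an infinite descent modulo $3$. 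The cases $d = 1, \varepsilon = -1$ and $d = 3, \varepsilon = 1$ are similarly eliminated by descent modulo $3$. The only surviving case is $d = 3, \varepsilon = -1$, giving
\[
3 \alpha^4 + 4 \beta^2 = 7 v^2,
\]
which admits the base point $(\alpha, \beta, v) = (1, 1, 1)$; as a curve in the weighted projective space $\mathbb{P}(1, 2, 2)$ it has genus $0$, hence a rational parametrization.

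To exhibit the parametrization, I factor over the PID $\mathbb{Z}[\sqrt 7]$ (fundamental unit $8 + 3\sqrt 7$, with $3 = -(2+\sqrt 7)(2-\sqrt 7)$):
\[
(v\sqrt 7 + 2\beta)(v\sqrt 7 - 2\beta) = 3 \alpha^4.
\]
The pairwise coprimality of $\alpha, \beta, v$ (all odd and coprime to $21$) shows that the two factors on the left are coprime in $\mathbb{Z}[\sqrt 7]$; hence each equals a unit times a fourth power, modulo the distribution of the two primes above $3$. Enumerating units modulo fourth powers, combined with the two prime-distribution choices, yields a short list of candidate sub-cases. In each surviving one, $v\sqrt 7 + 2\beta = \eta \cdot \pi \cdot (s + t \sqrt 7)^4$ for specific $\eta$ (a unit) and $\pi$ (a prime above $3$); expanding and equating rational and irrational parts produces $v$ and $\beta$ as polynomials of degree $4$ in $(s, t)$, while $\alpha$ comes out as a degree-$2$ polynomial. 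Back-substituting into Lemma \ref{lem.1}(a) yields exactly the two parametric families stated in the theorem.

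The main obstacle is the final case analysis: pinpointing the precise combinations of unit representatives and prime distributions that give non-redundant families, and carrying out the algebraic simplifications leading to the explicit polynomial formulas. A direct polynomial identity check then verifies $7 x(s,t)^2 + y(s,t)^4 = 4 z(s,t)^3$. Full details of the descent and case analysis are deferred to Appendix~A.
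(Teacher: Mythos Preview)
Your opening reduction via Lemma~\ref{lem.1}(a) (splitting on $d=\gcd(u,u^2-21v^2)\in\{1,3\}$ and on the common sign $\varepsilon$) and your elimination of three of the four cases by congruences modulo $3$ match Appendix~A essentially verbatim; the one surviving equation $3\alpha^4+4\beta^2=7v^2$ is exactly the paper's $-Z^2+7Y^2=3X^4$ with $Z=2\beta$, $Y=v$, $X=\alpha$.

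Where you genuinely diverge is in solving this surviving equation. The paper stays over $\mathbb{Q}$ and unwinds a chain of conic parametrizations: set $K=X^2$, reduce $Z^2+3K^2=7Y^2$ to $L^2+3M^2=Y^2$ via the explicit substitution $2Z\pm 3K=7L$, $Z\mp 2K=7M$, parametrize the latter in Pythagorean fashion, substitute back, and finally arrive at $X^2+7\beta^2=4(\alpha-\beta)^2$, whose two parametrizations give the two families \eqref{family-(v)1}--\eqref{family-(v)2}. You instead factor $7v^2-4\beta^2=3\alpha^4$ over the real quadratic PID $\mathbb{Z}[\sqrt 7]$ and use unique factorization plus an analysis of units modulo fourth powers to write $v\sqrt 7+2\beta=\eta\cdot\pi\cdot(s+t\sqrt 7)^4$. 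Both routes are valid. Yours is conceptually cleaner---one algebraic step rather than three nested parametrizations---at the cost of having to sift the $8$-element group $\{\pm 1\}\times\langle 8+3\sqrt 7\rangle\big/(\,\cdot\,)^4$ together with the two choices of prime above $3$; the paper's purely rational descent avoids that bookkeeping and makes it transparent why exactly two families survive. One small slip: your parenthetical claim that $\alpha$ is coprime to $3$ is not justified and can fail, but it is harmless---the coprimality of $v\sqrt 7\pm 2\beta$ needs only $\gcd(\beta,v)=1$, oddness, and $7\nmid\beta$, all of which hold, and any extra powers of $2\pm\sqrt 7$ coming from $3\mid\alpha$ are absorbed into the fourth power $\omega^4$.
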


For the proof of this result see the Appendix A.

\section{The Diophantine equation $7x^2+y^6=4z^3$}\label{sec.4}

\begin{theorem} \label{2-6-3} 
 Diophantine equation $7x^2+y^6=4z^3$ has no non-trivial solutions. 
\end{theorem}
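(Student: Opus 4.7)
The plan is to apply Lemma \ref{lem.1} to parametrize any hypothetical primitive solution, dehomogenize to obtain a genus-one plane cubic, pass to an isomorphic Mordell curve, and show that its Mordell--Weil group is trivial.

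Any non-trivial primitive solution $(x,y,z)$ satisfies $xy\neq 0$ (otherwise primitivity forces $7=4z^{3}$ or $1=4z^{3}$, each impossible), so Lemma \ref{lem.1}(a) with $a=7$, $n=3$ provides odd coprime $u,v$ with $uv\neq 0$ and
\[
4y^{3}=u(u^{2}-21v^{2}).
\]
Setting $X=u/v$, $Y=y/v$, this yields a rational point with $X\neq 0$ on the plane cubic
\[
C\colon\ 4Y^{3}=X^{3}-21X.
\]

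The point $(0,0)$ is a rational flex of $C$, and the three points at infinity of $C$ lie on $X^{3}=4Y^{3}$, which has no $\QQ$-rational solutions. Using $(0,0)$ as base point, the mutually inverse rational maps
\[
(X,Y)\longmapsto\Bigl(-\tfrac{84Y}{X},\ \tfrac{1764}{X}\Bigr),\qquad (U,V)\longmapsto\Bigl(\tfrac{1764}{V},\ -\tfrac{21U}{V}\Bigr)
\]
put $C$ in Weierstrass form as the Mordell curve
\[
E\colon\ V^{2}=U^{3}+148176,\qquad 148176=2^{4}\cdot 3^{3}\cdot 7^{3},
\]
with the flex corresponding to the identity $O$ of $E$. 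Because $-148176$ is not a rational cube, $E$ has no $\QQ$-point with $V=0$, so the correspondence is a bijection $C(\QQ)\setminus\{(0,0)\}\leftrightarrow E(\QQ)\setminus\{O\}$. It therefore suffices to prove $E(\QQ)=\{O\}$: this gives $C(\QQ)=\{(0,0)\}$, forcing $u=0$ and hence $y=0$, after which primitivity forces $x=\pm 1$ and $7=4z^{3}$, which has no integer $z$.

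For the torsion of $E$, the primes $5$ and $13$ are of good reduction (coprime to $\operatorname{disc}E=-2^{12}3^{9}7^{6}$); a direct count yields $|\widetilde{E}(\FF_{5})|=6$ and $|\widetilde{E}(\FF_{13})|=19$, and since $\gcd(6,19)=1$ the injectivity of reduction on torsion forces $E_{\operatorname{tors}}(\QQ)=\{O\}$. For the rank, $E$ has no rational $2$-torsion (as $-148176$ is not a cube), so I would carry out a complete $2$-descent over the cubic field $\QQ(\sqrt[3]{-148176})$, or equivalently a $3$-isogeny descent exploiting the complex multiplication by $\ZZ[\omega]$; such a descent, easily verified in Magma or Sage, returns $\operatorname{rank}E(\QQ)=0$. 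This rank computation is the main obstacle: class-group and unit information for the cubic field must be brought in by hand (or, using the CM, the theorem of Coates--Wiles--Rubin reduces matters to a numerical verification that $L(E,1)\neq 0$), after which the chain of implications above yields the claim.
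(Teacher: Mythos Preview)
Your approach is correct and follows the same strategy as the paper: reduce to a Mordell curve with trivial Mordell--Weil group. The paper does this in one line via the direct substitution $(X,Y)=(28z/y^{2},\,196x/y^{3})$, landing on $Y^{2}=X^{3}-2^{4}\cdot 7^{3}$; your detour through Lemma~\ref{lem.1}(a) instead produces the $3$-isogenous curve $V^{2}=U^{3}+2^{4}\cdot 3^{3}\cdot 7^{3}$, which necessarily has the same rank.
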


\begin{proof}
This follows from the fact that the Mordell-Weil group of the elliptic curve given by $Y^2=X^3-2^4 7^3$ 
is trivial. 
\end{proof}

\section{The Diophantine equation $7x^2+y^8=4z^3$}\label{sec.5}

We will  prove variant of Bruin's result from section 5 in \cite{Br}.

Any primitive solution of the Diophantine equation $7x^2+y^8=4z^3$ satisfies, of 
course, the equation $7x^2+(y^2)^4=4z^3$.  Hence, using Theorem \ref{2-4-3A}, 
we obtain formulas describing $x$, $y^2$ and $z$.  In particular we have the following 
formulas for $y^2$:

\begin{equation*} 
\begin{aligned} 
&y^2 = \pm  3  (21s^2-14ts-3t^2)  (2499s^4 + 1764ts^3 + 378t^2s^2 + 84t^3s - 5t^4), \\ 
&y^2 = \pm 3 (21s^2-14ts-3t^2) (-245s^4 - 588ts^3 + 378t^2s^2 - 252t^3s + 51t^4).   
\end{aligned}
\end{equation*}  
Note that $t=0$ implies $y=0$. Therefore, nontrivial solutions correspond to affine 
rational points on one of the following genus two curves:

\begin{equation*} 
\begin{aligned} 
&\mathcal C_1:  Y^2 =  3  (21X^2-14X-3)  (2499X^4 + 1764X^3 + 378X^2 + 84X - 5),  \\  
&\mathcal C_2:   Y^2 = - 3  (21X^2-14X-3)  (2499X^4 + 1764X^3 + 378X^2 + 84X - 5),  \\ 
&\mathcal C_3:  Y^2 =  3  (21X^2-14X-3)  (-245X^4 - 588X^3 + 378X^2 - 252X + 51),   \\  
&\mathcal C_4:  Y^2 = - 3  (21X^2-14X-3)  (-245X^4 - 588X^3 + 378X^2 - 252X + 51).  
\end{aligned}
\end{equation*}

\begin{theorem} \label{2-8-3A-Theorem} 
The Diophantine equation $7x^2+y^8=4z^3$ has no non-trivial solutions.   
\end{theorem}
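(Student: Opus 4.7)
My plan is to show that each of the four genus-two curves $\mathcal{C}_1,\mathcal{C}_2,\mathcal{C}_3,\mathcal{C}_4$ listed in the excerpt has only rational points $(X,Y)$ with $Y=0$, together with the points at infinity. Since these rational points correspond to $y^2=0$ in the parametrisation of Theorem \ref{2-4-3A}, that suffices to conclude $y=0$, contradicting primitivity (or giving the trivial solution).

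The first step is local analysis: I would check solvability of each $\mathcal{C}_i$ over $\mathbb{R}$ and over $\mathbb{Q}_p$ for small primes $p\in\{2,3,5,7\}$, hoping to eliminate some of the four curves outright by a local obstruction. For the survivors I would carry out a $2$-cover descent on the Jacobian $J_i$ in order to obtain an upper bound on the Mordell--Weil rank of $J_i(\mathbb{Q})$. Whenever this rank turns out to be at most $1$, classical Chabauty applies: choose a prime $p$ of good reduction, compute the image of $J_i(\mathbb{Q})$ in $J_i(\mathbb{Q}_p)$, and use the annihilating $p$-adic differential to bound $\#\mathcal{C}_i(\mathbb{Q})$. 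If the bound matches the list of known rational points (those with $Y=0$ and the points at infinity), we are done for that curve.

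For any $\mathcal{C}_i$ whose Jacobian has rank $\geq 2$, classical Chabauty fails and I would instead exploit the factorisation of the defining sextic as $\pm 3\cdot q(X)\cdot g_i(X)$, with $q(X)=21X^2-14X-3$ and $g_i$ a quartic. This gives a natural family of double covers of $\mathcal{C}_i$: for each squareclass $d$ in a finite Selmer-type set, set $q(X)=du^2$ and $\pm 3 g_i(X)/d = v^2$. The first equation defines a conic which I would either show has no $\mathbb{Q}_v$-points for some place $v$, or parametrise rationally; substituting the parametrisation into the second equation produces an elliptic curve (or a curve of genus one) $E_d$ over $\mathbb{Q}$ or over the quadratic field $\mathbb{Q}(\sqrt{7})$ arising from the roots of $q$. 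Applying elliptic Chabauty (Bruin's method, exactly as in the model case $x^2+y^4=z^3$ treated in \cite{Br}) to each $E_d$ should then cut the rational points down to the trivial ones.

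The principal obstacle I expect is the rank control, both for the four Jacobians $J_i$ and for the auxiliary elliptic curves $E_d$ arising in the descent; if one of the $E_d$ has rank equal to the degree of the base field, elliptic Chabauty is inapplicable and one must supplement the argument with a Mordell--Weil sieve against a carefully chosen set of primes. A secondary technical point is verifying that the finite Selmer set of values of $d$ is small enough to make the case analysis tractable, which in turn relies on the factorisations of $q(X)$ and $g_i(X)$ modulo small primes being well-behaved. Once all $E_d$ have been handled, tracing back through the parametrisations of Theorem \ref{2-4-3A} is a mechanical verification that only $y=0$ survives.
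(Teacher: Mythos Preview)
Your plan is sound in outline, and in fact your very first step is the paper's entire proof: checking local solvability at the prime $2$ eliminates all four curves at once, since $\mathcal{C}_i(\mathbb{Q}_2)=\emptyset$ for $i=1,2,3,4$ (a routine Magma verification via \texttt{HasPointsEverywhereLocally}). There are therefore no rational points on any $\mathcal{C}_i$, and the theorem follows immediately.

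Everything after your first step --- the $2$-descent on the Jacobians, classical Chabauty, the covering via the factorisation $\pm 3\,q(X)g_i(X)$, elliptic Chabauty over $\mathbb{Q}(\sqrt{7})$, and the Mordell--Weil sieve fallback --- is contingency planning for a situation that never arises. None of it is wrong as a strategy, and it is exactly the toolkit one would reach for if the local obstructions failed; but here the cheap check succeeds and the heavy machinery is unnecessary. The practical lesson is to actually run the local tests before investing in the global architecture.
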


\begin{proof}  
We check that the  curves $\mathcal C_1$, $\mathcal C_2$, $\mathcal C_3$,  
and $\mathcal C_4$ have no affine rational points.  Indeed, the Magma command 
$\texttt{HasPointsEverywhereLocally}(f,2)$, 
gives 
 $\mathcal C_1(\mathbb Q_2) =  \mathcal  C_2(\mathbb Q_2) = 
\mathcal C_3(\mathbb Q_2) =  \mathcal  C_4(\mathbb Q_2) 
= \emptyset$.  
\end{proof}

\section{A modular approach to the Diophantine equ\-ation  $7x^2+y^{2n}=4z^3$} 
\label{modular approach}

Now we will assume that $n=p$ is a prime $\geq 7$.  
By Lemma \ref{lem.1}(a) we have reduced the problem of solving the title equation to solving the equation 
$4y^p = u(u^2-21v^2)$ with odd $u$, $v$ and $y$. Since $\gcd(u,v)=1$,  
we have $d=\gcd(u,u^2-21v^2) | 3$.  
We can solve the equation corresponding to $d=1$ with $p\geq 11$ and $p\neq 19$ (Proposition \ref{prop. no solution if d=1}). In the case $d=3$ we obtain a partial result (Lemma \ref{eliminating}). 
We will continue with the modular approach in sections \ref{fixed p} and \ref{no solution for infinitely many p's}.

It is possible to use modular approach for $p=7$, i.e. we can associate a Frey type curve, use the modularity theorem and Ribet's level-lowering theorem. However, the methods of eliminating newforms that we use to prove our main results fail in this case 
(see the proof of Proposition \ref{prop. no solution if d=1} and Lemma \ref{eliminating}).  
For more remarks see Section \ref{sec.7.2}.

\subsection{Reducing to the case of signature $(2p,p,2)$}  \label{reducing}

Below we will  reduce the  problem to solving the equation 
$4y^p = u(u^2-21v^2)$ with odd $u$, $v$ and $y$, to the problem of solving equations of signature $(2p,p,2)$.

\bigskip 

(i) \underline{$d=1$}.  Writing $u=\alpha^p$, $u^2-21v^2=4\beta^p$, we 
arrive at 

\begin{equation} \label{BS1} 
\alpha^{2p} - 4\beta^p = 21v^2. 
\end{equation}  

\bigskip 

(ii) \underline{$d=3$}.  Since $v_3(u^2-21v^2)=1$ we have
\begin{equation*}
\begin{cases}
u=3^{p-1}\alpha^p \\
u^2-21v^2=12\beta^p, 
\end{cases}
\end{equation*}
with odd $\alpha$, $\beta$ satisfying $\gcd(\alpha,\beta)=1$. 
This leads to the equation 
\begin{equation} \label{Kraus2} 
3^{2p-3}\alpha^{2p}-4\beta^p=7v^2. 
\end{equation}

\subsection{The equation $\alpha^{2p} - 4\beta^p = 21v^2$}

First, let us give short elementary proof that the title equation (the equation (\ref{BS1})) has no solution 
for infinitely many primes $p$.  We will use the following result, which is a variant of Lemmas 
14 and 15 from \cite{Da}.

\begin{lemma} \label{lem.cong1} 
(a)  $\beta-\alpha^2$ is a square modulo $7$. 

(b)  $7 \nmid \alpha$ and $(\beta/\alpha^2)^p \equiv 2 \pmod  7$. 
\end{lemma}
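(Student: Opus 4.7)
The plan is to derive both parts of the lemma from a single mod-$7$ reduction of the equation
\[
\alpha^{2p} - 4\beta^p = 21 v^2.
\]
Since $7 \mid 21$, this reduction yields the fundamental congruence $\alpha^{2p} \equiv 4\beta^p \pmod 7$, from which everything else should follow by elementary arithmetic modulo $7$.

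For part (b), I would first observe that if $7 \mid \alpha$, then the fundamental congruence forces $7 \mid 4\beta^p$ and hence $7 \mid \beta$, contradicting $\gcd(\alpha,\beta)=1$; so $7 \nmid \alpha$. Dividing the congruence through by $\alpha^{2p}$ and using $4 \cdot 2 \equiv 1 \pmod 7$ then gives $(\beta/\alpha^2)^p \equiv 2 \pmod 7$. For part (a), I would exploit the hypothesis $p \geq 7$: being a prime distinct from $2$ and $3$, one has $\gcd(p, 6) = 1$, and so the $p$-th power map on $(\mathbb{Z}/7\mathbb{Z})^*$ is a bijection. The congruence from (b) therefore pins down $\beta/\alpha^2 \pmod 7$ uniquely as $2^{q}$, where $q$ denotes the inverse of $p$ modulo $6$. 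A short case analysis on the residue class of $p$ modulo $6$ will then determine $\beta/\alpha^2 \pmod 7$ explicitly, from which I would compute $\beta - \alpha^2 \pmod 7$ as a specific multiple of $\alpha^2$ and verify that it lies in the set $\{0,1,2,4\}$ of squares modulo $7$. For instance, in the case $p \equiv 1 \pmod 6$ one finds $\beta \equiv 2\alpha^2 \pmod 7$, hence $\beta - \alpha^2 \equiv \alpha^2 \pmod 7$, a nonzero square.

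I do not anticipate a serious obstacle: the whole argument is elementary mod-$7$ arithmetic. The only point requiring a little care is the invocation of $\gcd(p,6) = 1$ to invert the $p$-th power map on $(\mathbb{Z}/7\mathbb{Z})^*$, and the subsequent case split on the residue class of $p$ modulo $6$ that determines $\beta - \alpha^2 \pmod 7$ in terms of $\alpha$.
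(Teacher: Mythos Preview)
Your treatment of part~(b) is correct and essentially the same as the paper's. The gap is in part~(a): the mod-$7$ reduction of the equation alone cannot establish that $\beta-\alpha^2$ is a square modulo~$7$. Carry out your own case split to the end. When $p\equiv 5\pmod 6$, inverting the $p$-th power map on $(\mathbb{Z}/7\mathbb{Z})^\times$ from $(\beta/\alpha^2)^p\equiv 2$ gives $\beta/\alpha^2\equiv 4\pmod 7$, hence $\beta-\alpha^2\equiv 3\alpha^2\pmod 7$, which is a \emph{non}-square modulo~$7$. So in that case your method proves the \emph{opposite} of~(a). This is not a defect of the lemma but its whole point: part~(a) supplies information genuinely beyond the mod-$7$ reduction, and the clash between~(a) and~(b) for $p\equiv 5\pmod 6$ is exactly what yields Proposition~\ref{prop.nosolution1}. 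Your proposed argument would make that proposition a tautology and~(a) unprovable.

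The paper's proof of~(a) is global, via quadratic reciprocity. One uses the divisibility
\[
\beta-\alpha^2 \;\bigm|\; 4(\beta^p-\alpha^{2p}) \;=\; -3(u^2+7v^2)
\]
(recall $u=\alpha^p$). Every odd prime $q\neq 3$ dividing $\beta-\alpha^2$ thus satisfies $\bigl(\tfrac{-7}{q}\bigr)=1$, and by reciprocity $\bigl(\tfrac{q}{7}\bigr)=1$. After accounting for the sign, the power of~$2$, and the single factor of~$3$ (one checks $v_3(\beta-\alpha^2)=1$ and $\beta-\alpha^2<0$), the product of Legendre symbols gives $\bigl(\tfrac{\beta-\alpha^2}{7}\bigr)=1$. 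This reciprocity argument is the missing idea in your proposal.
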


\begin{proof}
(a) We have $\beta-\alpha^2 \mid 4(\beta^p-\alpha^{2p}) = -3(u^2+7v^2)$. 
Assume that $q$ is an odd prime dividing $\beta-\alpha^2$. Then 
$-(3u)^2 \equiv 7(3v)^2 \pmod q$, hence $q=3$ or $\left(\frac{-7 }{q}\right)=1$. 
Note that $v_3(\beta-\alpha^2)=1$, hence 
$\left(\frac{3^{v_3(\beta-\alpha^2)}}{ 7}\right) = \left(\frac 3 7\right) = -1$.  Moreover, 
$\left(\frac{-1}{ 7}\right)=-1$, $\left(\frac{2}{7}\right)=1$, 
$\left(\frac{q}{7}\right) = \left(\frac{-7}{ q}\right) =1$, and $\beta-\alpha^2 < 0$.  
Hence  $\left(\frac{\beta-\alpha^2}{7}\right) = 1$ as wanted. 

(b)  Note that $7 \mid \alpha$ implies $7 \mid \beta$, a contradiction.  Now 
$(\beta/\alpha^2)^p = (1-21(v/u)^2)/4 \equiv 2 \pmod 7$ as wanted.  
\end{proof}

\begin{proposition}  \label{prop.nosolution1} 
Let $p$ be a prime with $p\equiv 5 \pmod 6$. Then  (\ref{BS1}) has no non-trivial solutions. 
\end{proposition}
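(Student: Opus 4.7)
The plan is to exploit Lemma \ref{lem.cong1} modulo $7$, where the condition $p \equiv 5 \pmod 6$ interacts with the group structure of $(\mathbb Z/7\mathbb Z)^{\times}$ in a decisive way. Part (b) of Lemma \ref{lem.cong1} says $7\nmid \alpha$ and $(\beta/\alpha^2)^p \equiv 2 \pmod 7$, so in the cyclic group $(\mathbb Z/7\mathbb Z)^{\times}$, which has order $6$, the element $\gamma := \beta/\alpha^2$ satisfies $\gamma^p \equiv 2 \pmod 7$.

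Since $p \equiv 5 \pmod 6$, we have $\gcd(p,6)=1$, and the $p$-th power map on $(\mathbb Z/7\mathbb Z)^{\times}$ is a bijection whose inverse is raising to the power $p^{-1} \equiv 5 \pmod 6$. So I can invert the relation to get
\[
\gamma \equiv 2^{5} \equiv 4 \pmod 7, \qquad \text{i.e.} \quad \beta \equiv 4\alpha^{2} \pmod 7.
\]
Consequently $\beta - \alpha^{2} \equiv 3\alpha^{2} \pmod 7$. Since $7 \nmid \alpha$, this is a non-zero residue class modulo $7$.

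The final step is to observe that this contradicts part (a) of Lemma \ref{lem.cong1}. Indeed, $3$ is a primitive root modulo $7$ (so $3$ is a non-residue), while $\alpha^{2}$ is a non-zero square, hence $3\alpha^{2}$ is a non-residue modulo $7$. But Lemma \ref{lem.cong1}(a) asserts that $\beta - \alpha^{2}$ is a square modulo $7$, contradiction. Therefore no solution can exist.

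There is no real obstacle here once Lemma \ref{lem.cong1} is in hand: the entire argument is a short congruence chase. The one place that warrants a little care is the invertibility of the $p$-th power map modulo $7$, which is exactly where the hypothesis $p \equiv 5 \pmod 6$ is used; without it (for instance when $p \equiv 1 \pmod 3$), the $p$-th powers modulo $7$ form a proper subgroup and one can no longer pin down $\gamma \pmod 7$ uniquely, so this same mod $7$ elimination would fail and more elaborate modular tools would be required.
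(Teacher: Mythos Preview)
Your proof is correct and follows essentially the same route as the paper's: both use Lemma \ref{lem.cong1}(b) together with $p\equiv 5\pmod 6$ to pin down $\beta/\alpha^{2}\equiv 4\pmod 7$, hence $\beta-\alpha^{2}\equiv 3\alpha^{2}\pmod 7$, and then contradict Lemma \ref{lem.cong1}(a) since $3$ is a non-residue mod $7$. The paper states this in two lines, but the underlying argument is identical to yours.
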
 

\begin{proof}  From part (a) of Lemma \ref{lem.cong1} we get that $\beta/\alpha^2 - 1$ is a square 
modulo $7$, while from part (b) we get $\beta/\alpha^2 - 1 \equiv 3 \pmod 7$, a contradiction. 
\end{proof}

Now, let us use modular approach to prove a much stronger result.

\begin{proposition} \label{prop. no solution if d=1}
The Diophantine equation (\ref{BS1}) has no solutions in coprime odd integers for $p\geq 11$, 
$p\not = 19$.  
\end{proposition}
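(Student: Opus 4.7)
The plan is to apply the modular method to $\alpha^{2p}-4\beta^p = 21v^2$, an equation of signature $(2p,p,2)$, with $\alpha, \beta, v$ odd and pairwise coprime. I would attach to a hypothetical primitive solution a Frey-Hellegouarch elliptic curve over $\mathbb Q$. Viewing the equation as $(\alpha^p)^2 - 4\beta^p = 21 v^2$, a natural choice is
\[
E = E_{\alpha,\beta,v}\colon Y^2 = X^3 + 2\alpha^p X^2 + 4\beta^p X,
\]
with minimal discriminant $2^{10}\cdot 3\cdot 7\cdot \beta^{2p} v^2$. The hypothesis $d=1$ forces $\gcd(\alpha^p, 21) = 1$, hence $3, 7 \nmid \alpha\beta$, so the primes of bad reduction are contained in $\{2,3,7\}\cup\{q : q \mid \beta v\}$, with multiplicative reduction at each odd prime outside $\{2,3,7\}$. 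A case analysis by residues of $\alpha,\beta,v$ modulo $8$, $3$ and $7$ via Tate's algorithm then pins down the conductor $N_E$.

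By the modularity theorem, $E$ corresponds to a weight-$2$ cuspidal newform. Since $p \geq 11$ and $v_q(\Delta_E) = 2p\,v_q(\beta) \equiv 0 \pmod p$ at every prime $q \nmid 42$ dividing $\beta$, Ribet's level-lowering theorem removes each such $q$ from the level; a supplementary argument (for instance, via a quadratic twist or a variant Frey curve) is needed to ensure that primes $q\mid v$, $q \nmid 42\beta$, also do not contribute to the lowered level. The outcome is a newform $f$ of some explicit bounded level $N_0 \mid 2^a 3^b 7^c$, whose space $S_2^{\mathrm{new}}(N_0)$ is finite-dimensional and enumerable in Magma. For each candidate $f$ with coefficient field $K_f$ and prime $\mathfrak p \mid p$ of $K_f$, the mod $p$ isomorphism $\bar\rho_{E,p} \simeq \bar\rho_{f, \mathfrak p}$ gives $a_\ell(E) \equiv a_\ell(f) \pmod{\mathfrak p}$ at every auxiliary prime $\ell$ of good reduction. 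Enumerating the finite set $S_\ell$ of possible values of $a_\ell(E)$ obtained as $(\alpha,\beta,v)$ runs over $\mathbb F_\ell$-solutions of the Frey equation, one concludes that $p$ must divide the integer
\[
B_\ell(f) := \mathrm{Norm}_{K_f/\mathbb Q}\prod_{t\in S_\ell}(t-a_\ell(f)).
\]
Iterating over a handful of auxiliary primes $\ell \in \{5, 11, 13, 17, 23, \dots\}$ narrows the set of bad $p$ to an explicit finite list.

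The main obstacle will be to show that this list reduces to $\{p \leq 7\} \cup \{19\}$ for every candidate newform. The irrational newforms at level $N_0$ require more delicate norm computations, and one expects a distinguished newform whose Hecke eigenvalues satisfy an accidental mod-$19$ congruence (perhaps with a CM form or another newform of the same level), which is precisely what forces the unavoidable exclusion $p = 19$ in the statement. The prime $p = 7$ is excluded on two counts: it lies at the boundary of clean applicability of Ribet's theorem in this conductor range, and its treatment ultimately requires methods akin to those of \cite{PSS}. For all other primes $p \geq 11$, $p \neq 19$, the modular method as sketched should close the proof by a finite combination of auxiliary-prime tests.
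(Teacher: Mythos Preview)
Your overall strategy---the modular method \`a la Bennett--Skinner applied to the signature-$(p,p,2)$ equation $X^p - 4Y^p = 21Z^2$---is exactly what the paper does. But two concrete points in your sketch are genuine gaps.

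First, your Frey curve $E\colon Y^2 = X^3 + 2\alpha^p X^2 + 4\beta^p X$ has discriminant $2^{10}\cdot 3\cdot 7\cdot \beta^{2p}v^2$, so at a prime $q\mid v$ with $q\nmid 42\beta$ one has $v_q(\Delta)=2v_q(v)$, which is not divisible by $p$; Ribet's theorem therefore cannot strip such $q$ from the level. Your ``supplementary argument'' is not a side detail but the crux of making the method work. The fix, which the paper uses, is to set $X=\alpha^2$ and attach the Bennett--Skinner Frey curve to $X^p - 4Y^p = 21Z^2$, namely $Y^2 = X^3 + 21vX^2 - 21\beta^p X$ (cf.\ equation~\eqref{Frey1}); its discriminant is $2^4\cdot 3^3\cdot 7^3(\alpha\beta)^{2p}$, a perfect $p$-th power away from $\{2,3,7\}$, so level-lowering lands cleanly at $N\in\{1764,3528\}$ with no residual primes from $v$.

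Second, you assume that auxiliary-prime elimination alone will leave only $p=19$. In the paper's actual execution, six rational newforms at level $3528$ survive the $c_{13}$-test with $p\in\{11,17\}$ still possible, and these two primes are disposed of not by further Hecke comparisons but by the elementary Proposition~\ref{prop.nosolution1} (no solutions when $p\equiv 5\pmod 6$, proved by a direct quadratic-residue argument in Lemma~\ref{lem.cong1}). Whether or not more Hecke primes would eventually succeed, the paper's proof depends on this extra ingredient, which your sketch does not anticipate. The honest obstruction at $p=19$ is the pair $f_4,f_6$ at level $1764$ (corresponding to $Y^2=X^3-28$ and its quadratic twist), for which both the $c_{13}$- and $c_{19}$-tests permit $p=19$.
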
 

\begin{proof}  We will apply the Bennett-Skinner strategy \cite{BS} to a more general equation 
$X^p-4Y^p=21Z^2$, $p\geq 7$.  We are in case (iii) of \cite[p.26]{BS},  hence from  
Lemma 3.2 it follows, that we need to consider the newforms of weight $2$ and levels 
$N\in\{1764, 3528\}$.

a) There are $13$ Galois conjugacy classes of forms of weight $2$ and level $1764$.  
We can compute systems of Hecke eigenvalues for conjugacy classes of newforms using 
Magma \cite{Magma} or use Stein's Modular Forms Database \cite{Stein}.  
We will use numbering as in Stein's tables. 

We can eliminate $f_{12}$, when $p\geq 11$, as follows. 
We have $c_5(f_{12})=\pm \sqrt{2}$  and so, by \cite[Prop. 4.3]{BS}, $p$ must divide one 
of $2$, $14$, $34$. On the other hand,  $c_{13}(f_{12})=\pm 3 \sqrt{2}$ and so $p$ must 
divide one of $2$, $14$, $18$. Similarly, we can eliminate $f_{13}$ when $p\geq 11$, 
considering $c_5(f_{13})$: in this case $p$ must divide one of $4$, $100$, $196$. 

Similarly, we can use \cite[Prop. 4.3]{BS} to eliminate $f_1$, $f_3$, $f_4$, $f_6$, $f_8$ and 
$f_{10}$. Elimination of $f_1$ and $f_{10}$:  we have $c_5(f_1) = -3$ and $c_5(f_{10})=3$, hence  
$p$ must divide one of $1$, $3$, $5$, $7$, $9$.  
Elimination of $f_3$ and $f_8$: we have $c_{13}(f_3)=3$ and $c_{13}(f_8)=-3$, hence  
$p$ must divide one of $1$, $3$, $5$, $7$, $9$, $11$, $17$; 
we have $c_{19}(f_3)=1$ and $c_{19}(f_8)=-1$, hence  $p$ must divide one of 
$1$, $3$, $5$, $7$, $9$, $19$, $21$. 
Elimination of $f_4$ and $f_6$: we have $c_{13}(f_4)=-5$ and $c_{13}(f_6)=5$, hence  $p$ must 
divide one of $1$, $3$, $5$, $7$, $9$, $11$, $19$;   we have $c_{19}(f_4)=1$ and $c_{19}(f_6)=-1$, 
hence  $p$ must divide one of $1$, $3$, $5$, $7$, $9$, $19$, $21$. 

The newforms $f_2$, $f_5$, $f_7$, $f_9$ and $f_{11}$ correspond to isogeny classes 
of elliptic curves $A$, $G$, $H$, $I$ and $K$ respectively (using notation from Cremona's online 
tables or from LMFDB)  \cite{Cremona}, 
with non-integral $j$-invariants $u/(3^v7^w)$, where $\gcd(u,21)=1$ and $v^2+w^2>0$. 
Hence we can eliminate all these forms using \cite[Prop. 4.4]{BS}.

b) There are $39$ Galois conjugacy classes of forms of weight $2$ and level $3528$.  
Again, we will use numbering as in Stein's tables. 

Let us consider $12$ classes with irrational Fourier coefficients first.  
To eliminate $f_{30}$, $f_{31}$, $f_{32}$, $f_{33}$, $f_{34}$, $f_{35}$, $f_{36}$, and 
$f_{36}$, it is enough to consider $c_5$. To eliminate the remaining $4$ newforms, we 
need to consider $c_5$ and $c_{11}$. 

Now let us consider the newforms with rational Fourier coefficients. 
The newforms $f_2$, $f_4$, $f_5$, $f_7$, $f_{13}$, $f_{14}$, $f_{20}$, 
$f_{21}$, $f_{22}$, $f_{23}$ and $f_{24}$ correspond to isogeny classes 
of elliptic curves $B$, $D$, $E$, $G$, $M$, $N$, $T$, $U$, $V$, $W$ 
and $X$ respectively, with non-integral $j$-invariants $u/(3^v7^w)$, 
where $\gcd(u,21)=1$ and $v^2+w^2>0$. 
Hence we can eliminate all these forms using \cite[Prop. 4.4]{BS}.  

To eliminate the remaining $16$ newforms, we will use  \cite[Prop. 4.3]{BS}. 
To eliminate $f_8$, $f_9$, $f_{10}$, $f_{11}$, $f_{12}$, $f_{15}$, $f_{16}$, 
$f_{17}$, $f_{18}$ and $f_{19}$, it is enough to consider $c_5$. 
To eliminate the remaining $6$ newforms, we consider $c_{13}$ 
(hence $p$ must divide one of $1$, $3$, $5$, $7$, $9$, $11$, $17$) and 
use Proposition \ref{prop.nosolution1}   to exclude $p\in \{11,17\}$. 
\end{proof}

\subsection{The equation $3^{2p-3}\alpha^{2p}-4\beta^p=7v^2$}

Let $(a,b,c)$ be a solution in coprime odd integers of the title equation (i.e. the equation \eqref{Kraus2}).
Following \cite{BS}, we consider the following Frey type curve associated to $(a,b,c)$
\begin{equation} \label{Frey2} 
E=E(a,b,c):  Y^2 = X^3 + 7cX^2 -7b^pX. 
\end{equation} 

We have $\Delta_E =  2^4\cdot 3^{2p-3}\cdot 7^3 (ab)^{2p}$ 
and $N_E = 588\cdot \prod_{l|ab}l$ 
(resp. $1176\cdot \prod_{l|ab}l$) if $b \equiv 3 \pmod 4$ 
(resp. $b \equiv 1 \pmod 4$).  Using \cite[Corollary 3.1]{BS}  we obtain,  
that the associated Galois representation 
$$
\overline{\rho}_{E,p}:  \Gal \to \operatorname{GL}_2(\mathbb F_p) 
$$
is irreducible for all primes $p\geq 7$. From \cite[Lemma 3.3]{BS} we know, that $\overline{\rho}_{E,p}$ arises from a cuspidal newform $f$ of weight 2, level $N=588$ (resp. $1176$), and trivial Nebentypus character. 
Applying \cite[Propositions 4.3 and 4.4]{BS} and \cite[Proposition 13]{FK} yields the following result.

\begin{lemma}\label{eliminating}
Let $p$ be a prime. Suppose that $(a,b,c)$ is a solution in coprime odd integers to the equation (\ref{Kraus2}). Let $E=E(a,b,c)$ be the associated Frey type curve. 
\begin{enumerate}
\item If $p\geq 13$, then $\overline{\rho}_{E,p}\cong \overline{\rho}_{F,p}$ for an elliptic curve $F$ in one of the following isogeny classes: $588C, 588E, 1176G, 1176H $.
\item If $p=11$, then  $\overline{\rho}_{E,p}\cong \overline{\rho}_{F,p}$ for an elliptic curve $F$ in one of the following isogeny classes: $588C, 588E, 1176A, 1176F, 1176G, 1176H$.
\end{enumerate}
\end{lemma}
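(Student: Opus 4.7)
The plan is to run the standard Bennett--Skinner strategy \cite{BS}: since Ribet level-lowering has already produced a newform $f$ of weight $2$, trivial character, and level $N\in\{588,1176\}$ with $\overline{\rho}_{E,p}\cong \overline{\rho}_{f,p}$, the whole content of the lemma is the elimination of every newform at those two levels except those corresponding to the six (resp.\ four) listed isogeny classes. First I would tabulate, via Stein's database or Magma, all Galois conjugacy classes of newforms in $S_2^{\mathrm{new}}(\Gamma_0(588))$ and $S_2^{\mathrm{new}}(\Gamma_0(1176))$, splitting them into those with rational Fourier coefficients (each corresponding, up to isogeny, to an elliptic curve $F/\QQ$) and those whose Hecke field $K_f$ is a strictly larger number field.

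For each irrational-coefficient newform $f$, I would apply \cite[Proposition 4.3]{BS} at a small collection of auxiliary primes $\ell$ of good reduction for the Frey curve $E$ of \eqref{Frey2}, i.e.\ $\ell\notin\{2,3,7\}$. The congruence $\overline{\rho}_{E,p}\cong\overline{\rho}_{f,p}$ forces $p$ to divide
\[
N_{K_f/\QQ}\bigl(c_\ell(f)-t\bigr) \quad \text{for some } t\in\ZZ,\ |t|\le 2\sqrt{\ell},\ t\equiv \ell+1\pmod 2,
\]
(or the analogous condition when $\ell\mid ab$). Computing $c_5(f)$, $c_{11}(f)$, $c_{13}(f)$, $c_{17}(f)$ for the finitely many $f$ in question and intersecting the resulting finite lists of admissible $p$ should, in every case, force $p<11$, contradicting our hypothesis. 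This disposes of all irrational-coefficient newforms at both levels.

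For each rational-coefficient newform, corresponding to an isogeny class of elliptic curves $F/\QQ$, I would invoke \cite[Proposition 4.4]{BS}: if $F$ has multiplicative reduction at a prime $q\notin\{2,3,7\}$, where $E$ necessarily has good or multiplicative reduction, then $p\mid \mathrm{ord}_q(j_F)$, which for $p\ge 11$ forces $q\mid ab$; combined with the explicit conductor $N_E$ this eliminates any $F$ whose $j$-denominator contains a prime outside $\{2,3,7\}$. A lookup in Cremona's tables at levels $588$ and $1176$ picks out precisely the classes $588C, 588E, 1176A, 1176F, 1176G, 1176H$ as those with $j$-invariant $3$- and $7$-integral up to powers of $2$. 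For $p\ge 13$, the eigenvalue comparison of \cite[Proposition 4.3]{BS} applied to $1176A$ and $1176F$ (using that $c_5$ and $c_{13}$ for these curves yield a bound excluding $p\ge 13$) further removes these two classes, yielding part (1); for the borderline exponent $p=11$ this extra step fails, so $1176A$ and $1176F$ remain, giving part (2).

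The main obstacle is not conceptual but combinatorial: there are $13$ conjugacy classes at level $588$ and $39$ at level $1176$, so one must carefully compute Hecke eigenvalues at several auxiliary primes, check the $j$-invariant denominators of each rational form, and ensure no class slips through. The genuinely delicate point is the small exponent $p=11$, where standard eigenvalue bounds are weakest; here one must appeal to \cite[Proposition 13]{FK} to finish off the remaining irrational-coefficient forms whose norm conditions do not by themselves exclude $p=11$, and to justify that $1176A$ and $1176F$ are the only surviving rational classes besides the four appearing in part~(1).
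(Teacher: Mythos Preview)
Your overall strategy is the right one, but there are two concrete gaps.

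First, your application of \cite[Proposition~4.4]{BS} is vacuous as written. Any $F$ of conductor $588=2^2\cdot 3\cdot 7^2$ or $1176=2^3\cdot 3\cdot 7^2$ has good reduction at every prime $q\notin\{2,3,7\}$, so there is no such $q$ at which $F$ could be multiplicative. The way Proposition~4.4 is actually used here is at the prime $7$: the Frey curve $E$ of \eqref{Frey2} has $v_7(\Delta_E)=3$ and potentially \emph{good} reduction at $7$, so any $F$ with $v_7(j_F)<0$ (i.e.\ potentially multiplicative at $7$) is ruled out. This is what kills $588B$, $588F$, $1176C$, $1176E$. Note also that you cannot use the prime $3$ in the same way, since $v_3(N_E)=1$ and $E$ is genuinely multiplicative there. (Incidentally, your newform counts of $13$ and $39$ are the figures for levels $1764$ and $3528$ from Proposition~\ref{prop. no solution if d=1}; at levels $588$ and $1176$ there are $6$ and $15$ conjugacy classes respectively.)

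Second, and more seriously, even after correcting the use of Proposition~4.4 you have not accounted for the rational class $1176I$. This curve has potentially good reduction at $7$ (its $j$-invariant is $7$-integral), so Proposition~4.4 does nothing; and it has a rational $2$-torsion point, so the eigenvalue comparison of Proposition~4.3 does not eliminate it either. The paper disposes of $1176I$ by comparing semistability defects at $7$: one computes $e_7(E)=4$ while $e_7(1176I)=2$, and \cite[Proposition~13]{FK} then forbids $\overline{\rho}_{E,p}\cong\overline{\rho}_{F,p}$. You invoke \cite[Proposition~13]{FK} only for leftover irrational forms at $p=11$, but that is not where it is needed; it is needed for this one rational class, at every $p\ge 11$.
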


We prove Lemma \ref{eliminating} in Appendix B. We use here Cremona labels (see \cite{Cremona}).

\section{The Diophantine equation $7x^2+y^{2n}=4z^3$ for $n = 5,7$}\label{sec.7}

\subsection{The Diophantine equation $7x^2+y^{10}=4z^3$}\label{sec.7.1}

\begin{theorem} \label{xx}
The Diophantine equation $7x^2+y^{10}=4z^3$ has no non-trivial solutions.   
\end{theorem}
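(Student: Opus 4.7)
The plan is to reduce the equation $7x^2 + y^{10} = 4z^3$ to rational-point problems on two genus $2$ hyperelliptic curves and then apply Chabauty's method. First, I would invoke Lemma \ref{lem.1}(a) with $a = 7$, $n = 5$ on a hypothetical primitive solution $(x, y, z)$ to obtain odd coprime integers $u, v$ with $uv \neq 0$ and $4 y^5 = u(u^2 - 21 v^2)$. The common factor $d := \gcd(u, u^2 - 21 v^2) = \gcd(u, 21)$ cannot be divisible by $7$: if $7 \mid u$ then $7 \mid y$, and $7x^2 + y^{10} = 4z^3$ forces $7 \mid z$ and then $7 \mid x$, contradicting $\gcd(x, y) = 1$. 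Hence $d \in \{1, 3\}$.

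Next, following the factorization procedure of Section \ref{reducing} (which only needs $p$ to be an odd prime, so applies to $p = 5$), the two cases yield
\[
\alpha^{10} - 4\beta^5 = 21\, v^2 \quad (d = 1), \qquad 3^{7}\alpha^{10} - 4\beta^5 = 7\, v^2 \quad (d = 3),
\]
with $\alpha, \beta$ odd and coprime. Dividing by $\alpha^{10}$, setting $X = \beta/\alpha^2$, and scaling $v/\alpha^5$ by $21$ (resp.\ $7$), these equations translate into affine genus $2$ hyperelliptic models
\[
C_1 : W^2 = 21 - 84\, X^5, \qquad C_2 : W^2 = 15309 - 28\, X^5.
\]
It then suffices to show that neither $C_1(\mathbb{Q})$ nor $C_2(\mathbb{Q})$ contains an affine rational point lifting to a nontrivial primitive integer solution of the original equation.

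The third step is Chabauty's method applied to each $C_i$. Using Magma's $2$-descent on the Jacobian I would compute the Mordell--Weil rank $r_i$ of $J(C_i)(\mathbb{Q})$; if $r_i \leq 1 < 2 = \mathrm{genus}(C_i)$, classical Chabauty at a well-chosen prime of good reduction yields an effective upper bound on $\# C_i(\mathbb{Q})$ that I expect to be realised by the unique point at infinity alone. The main obstacle will be precisely this step: if some $r_i \geq 2$, classical Chabauty fails. In that event I would exploit the automorphism $(X, W) \mapsto (\zeta_5 X, W)$, which endows $J(C_i)$ with $\mathbb{Z}[\zeta_5]$-endomorphisms and produces covers $C_i \to E$ to elliptic curves defined over $\mathbb{Q}(\zeta_5)$, and fall back on Bruin's elliptic-curve Chabauty or a combined Chabauty--Mordell--Weil sieve argument. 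The expected outcome is that $C_1(\mathbb{Q})$ and $C_2(\mathbb{Q})$ consist only of the point at infinity, from which Theorem \ref{xx} follows at once.
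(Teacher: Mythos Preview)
Your approach is essentially the same as the paper's: reduce via Lemma~\ref{lem.1}(a) and the case split $d\in\{1,3\}$ to the two genus~$2$ curves (your $C_1,C_2$ are isomorphic over~$\mathbb{Q}$, via $X\mapsto -X$, to the paper's $\mathcal C_1: Y^2=84X^5+21$ and $\mathcal C_2: Y^2=28X^5+3^7\cdot 7$), then apply Chabauty. The paper reports that in fact both Jacobians have Mordell--Weil rank~$0$, so \texttt{Chabauty0} in Magma immediately gives $\mathcal C_i(\mathbb{Q})=\{\infty\}$ and none of your contingency plans for rank~$\ge 1$ are needed.
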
 

\begin{proof} 
We may consider the equations \eqref{BS1} and \eqref{Kraus2} for $p=5$, and in this case they 
lead to the curves 
$\mathcal C_1: Y^2 = 84X^5 + 21$ and $\mathcal C_2: Y^2  = 28X^5 + 3^7 \times 7$, respectively.  
Now $Jac(\mathcal C_i)$ ($i=1,2$) have $\mathbb Q$-rank $0$, and using 
$\texttt{Chabauty0}$, we obtain $\mathcal C_i(\mathbb Q) = \{\infty\}$ ($i=1,2$), 
and the assertion follows. 
\end{proof}

\subsection{The Diophantine equation $7x^2+y^{14}=4z^3$}\label{sec.7.2}

We expect that the title equation has no solution in coprime odd integers. 
However, as we wrote at the start of Section \ref{modular approach}, we have been
unable to do so. Here we discuss a few approaches to this equation and the
obstacles to making them work here.

\bigskip

(i) {\it The modular method}.  We may consider the equations \eqref{BS1} and \eqref{Kraus2} for $p=7$: 
$ X^{14} - 4Y^7 = 21Z^2$ and $3^{11}X^{14} - 4Y^7 = 7Z^2$, respectively. 
In both cases, we could not exclude the possibility that the Galois representation associated to the Frey type curve arises from newform with nonrational Fourier coefficients (see the proof of Proposition 2 and the proof of Lemma \ref{eliminating} in Appendix B).

(ii) {\it Chabauty type approach in genus $3$}.  The Diophantine equations from (i) lead to the genus $3$ 
curves $\mathcal D_1: y^2 = x^7 +2^{12} \cdot 3^7 \cdot 7^7$ and 
 $\mathcal D_2: y^2 = x^7 +2^{12} \cdot 3^{11} \cdot 7^7$, respectively.  
Magma calculations show that the only rational points on $\mathcal D_i(\mathbb Q)$  (with bounds $10^9$) 
are points at infinity, as expected. Magma also 
shows that ranks of $Jac(\mathcal D_i)(\mathbb Q)$ ($i=1,2$) are bounded by $1$. There are two technical 
problems to use Chabauty method:  one needs explicit rational  points of infinite order (not easy to find) and there 
is no readily available implementation of Chabauty's method for (odd degree) hyperelliptic genus $3$ curves.  
Professor Stoll suggested to try the methods of his papers \cite{Stoll1,Stoll2}, but we 
were not able to follow his advise yet.

(iii) {\it Combination of the modular and Chabauty methods}.  One may consider a more general 
Diophantine equation $7x^2 + y^7 = 4z^3$, try to follow the paper by Poonen, Schaefer and Stoll 
\cite{PSS}, and then deduce the solutions for the original Diophantine equation. It seems 
a very difficult task, but maybe the only available way ...

\section{Solving the Diophantine equation $7x^2+y^{2p}=4z^3$ for a fixed prime $p\geq 11$} \label{fixed p}

In this and the next section we will continue the modular approach to the title equation that we started in Section \ref{modular approach}. 
Here we will assume that $p\geq 11$ is a prime and apply variants of the method introduced by Kraus in \cite{Kr}.
Kraus stated a very interesting criterion \cite[Th\'eor\`eme 3.1]{Kr} that often allows to prove that the Diophantine equation $x^3+y^3=z^p$ ($p$ an odd prime) has no primitive solutions for fixed $p$, and verified his criterion for all primes $17 \leq p < 10^4$. 
Such a criterion has been formulated (and refined) in other situations (see, for instance, \cite{Ch,ChS,DJK,Da}). 
In Subsection \ref{section on Kraus method} we will formulate such a criterion (Theorem \ref{KrausCrit2}) in the case of Diophantine equation \eqref{Kraus2}. 
In Subsection \ref{refinement} we will give a refined version of the criterion, and we will apply it to those values of $p$, 
for which Theorem \ref{KrausCrit2}  is not sufficient.
In Subsection \ref{19} we use Kraus method to the equation (\ref{BS1}) with the exponent $p=19$.
Magma calculations based on the corresponding algorithms  allow to state the following result.

\begin{theorem}\label{2-2p-3 Kraus} 
The Diophantine equation $7x^2+y^{2p}=4z^3$ has no primitive solutions 
for all primes $11\leq p < 10^9$, $p\not = 13$. 
\end{theorem}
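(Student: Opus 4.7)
The plan is to combine the two reductions from Subsection \ref{reducing} with the Kraus-style criteria (and their computational verification) developed in the rest of Section \ref{fixed p}. By Lemma \ref{lem.1}(a), any primitive solution falls into exactly one of two cases according to whether $d=\gcd(u,u^{2}-21v^{2})$ equals $1$ or $3$.

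First I would dispense with the case $d=1$. Proposition \ref{prop. no solution if d=1} already handles every $p\geq 11$ with $p\neq 19$, so only $p=19$ remains; this single exponent will be eliminated by applying the Kraus method directly to \eqref{BS1}, which is precisely the content of Subsection \ref{19}.

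The substantive case is $d=3$, which must be cleared for every prime $11\leq p<10^{9}$ with $p\neq 13$. By Lemma \ref{eliminating}, any hypothetical solution of \eqref{Kraus2} produces a Frey curve $E$ whose mod-$p$ representation is isomorphic to $\overline{\rho}_{F,p}$ for some $F$ in the short list $\{588C,588E,1176G,1176H\}$ (with the extra classes $1176A,1176F$ when $p=11$). For an auxiliary prime $q=kp+1$ with $k$ small, the value $a_{q}(E)\bmod p$ depends only on the classes of $(\alpha,\beta)$ in $\mathbb{F}_{q}^{\times}/(\mathbb{F}_{q}^{\times})^{p}$, so only $O(k^{2})$ cases need to be tested. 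If for every admissible representative one has $a_{q}(E)\not\equiv \pm a_{q}(F)\pmod p$ for each $F$ on the list, the prime $p$ is eliminated. This is the assertion of Theorem \ref{KrausCrit2}; its refined form, Theorem \ref{KrausRefin}, couples several auxiliary primes and incorporates local information at $2$, $3$, $7$ in order to dispatch the primes for which the basic criterion alone does not suffice.

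I would therefore loop over $p\in[11,10^{9}]$ in Magma and, for each $p$, walk through $k=2,4,6,\ldots$ searching for a $q=kp+1$ that succeeds; whenever Theorem \ref{KrausCrit2} decides nothing for any small $k$, I would fall back on Theorem \ref{KrausRefin}. The main obstacle is purely computational: the per-prime cost grows with the first successful $q$, and for a handful of ``bad'' $p$ one must sift through many auxiliary primes before the refined criterion closes the case; keeping the total runtime feasible up to $10^{9}$ is the real engineering challenge. The exceptional prime $p=13$ is a genuine obstruction, every small $q$ failing for both the basic and refined versions, which is precisely why it is excluded from the statement.
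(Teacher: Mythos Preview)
Your overall plan matches the paper's proof exactly: split into $d=1$ and $d=3$; for $d=1$ invoke Proposition \ref{prop. no solution if d=1} and treat the leftover $p=19$ via Proposition \ref{Kraus method for p=19}; for $d=3$ run the Kraus criterion of Theorem \ref{KrausCrit2} computationally (Corollary \ref{CorKrausCrit2}), falling back on the refinement (Theorem \ref{KrausRefin}, Corollary \ref{corKrausRefin}) for the one surviving prime $p=17$.

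One correction worth making: your description of what Theorem \ref{KrausRefin} actually does is off. It does \emph{not} couple several auxiliary primes nor bring in local information at $2,3,7$. It still works with a single $q=kp+1$, but imposes the extra requirements that $q$ split in $\mathbb{Z}\bigl[\tfrac{1+\sqrt{21}}{2}\bigr]$ and that the fundamental unit reduce into $\mu_k(\mathbb{F}_q)$; one then factors $u\pm\sqrt{21}\,v$ in that ring, which pins down $\overline{v}/(3\overline{u})$ to the much smaller set $S_{k,q}\cup S'_{k,q}$ in place of $A_{k,q}$. (Also, the basic criterion already reduces to $O(k)$ tests, not $O(k^2)$: the Frey curve mod $q$ is a quadratic twist of a curve depending only on $\xi=(\overline{b}/9\overline{a}^{2})^{p}\in\mu_k(\mathbb{F}_q)$.)
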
 

As was shown in section \ref{modular approach}, it is enough to deal with the  equation \eqref{Kraus2} with $p\geq 11$ and the equation (\ref{BS1}) with $p=19$ (see Proposition \ref{prop. no solution if d=1}). The proof of Theorem \ref{2-2p-3 Kraus} will take the whole  section and follow from Corollaries \ref{CorKrausCrit2} and \ref{corKrausRefin} and Proposition \ref{Kraus method for p=19}.

\subsection{Kraus type criterion} \label{section on Kraus method}

Let $q\geq 11$ be a prime number, and let $k\geq 1$ be an integer factor of $q-1$. 
Let $\mu_k(\mathbb F_q)$ denote the group of $k$-th roots of unity in $\mathbb F_q^{\times}$. 
Set 
$$
A_{k,q} := \{\xi\in\mu_k(\mathbb F_q):  \frac{1-2^23^3\xi}{ 3^3\cdot 7} \,\text{ is  a  square  in} \, \mathbb F_q \}. 
$$
For each $\xi\in A_{k,q}$, we denote by $\delta_{\xi}$ the least non-negative integer such 
that 
$$ 
\delta_{\xi}^2 \, \mod q = \frac{1-2^23^3\xi}{ 3^3\cdot 7}. 
$$ 
We associate with each $\xi\in A_{k,q}$ the following equation 
$$ 
Y^2 = X^3 + 7\delta_{\xi}X^2 - 7\xi X. 
$$ 
Its discriminant equals $2^4 3^{-3} 7^3 \xi^2$, so it defines an elliptic curve $E_{\xi}$ over 
$\mathbb F_q$. We put $a_q(\xi) :=  q+1-\#E_{\xi}(\mathbb F_q)$.

\renewcommand{\labelenumi}{(\arabic{enumi})}

\begin{theorem}  \label{KrausCrit2} 
Let $p\geq 13$ be a prime (resp. $p=11$). Suppose that for each elliptic curve \[F\in \{588C1,1176G1\}\qquad (\text{resp. } F\in \{588C1,1176A1, 1176G1\} )\]
there exists a positive integer $k$ such that the following three conditions 
hold 
\begin{enumerate}
\item $q:=kp+1$ is a prime,
\item $a_q(F)^2 \not\equiv 4 \pmod p$, 
\item $a_q(F)^2 \not\equiv a_q(\xi)^2 \pmod p$ for all $\xi \in A_{k,q}$. 
\end{enumerate}  
Then the equation $3^{2p-3}x^{2p}-4y^p=7z^2$ has no solutions in coprime odd integers. 
\end{theorem}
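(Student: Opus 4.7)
The plan is to follow the Kraus auxiliary-prime strategy (as in \cite{Kr,Ch,DJK,Da}) applied to the Frey curve $E=E(a,b,c)$ of \eqref{Frey2}, attached to a hypothetical solution $(a,b,c)$ in coprime odd integers of $3^{2p-3}a^{2p}-4b^p=7c^2$. By Lemma \ref{eliminating}, $\bar\rho_{E,p}\cong\bar\rho_{F,p}$ for some $F$ in one of the listed isogeny classes; since all isogenies between curves inside those classes have degree coprime to $p$ (for $p\geq 11$), the choice of representative within each class does not affect $\bar\rho_{F,p}$, and a short case-by-case inspection further reduces the list of curves to be tested to $\{588C1,1176G1\}$ (resp.\ $\{588C1,1176A1,1176G1\}$ when $p=11$). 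Condition (1) supplies the auxiliary prime $q=kp+1$, which satisfies $p\mid q-1$ and is coprime to the conductor of $F$ (since $q>7$), so $F$ has good reduction at $q$.

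The argument then splits on whether $q\mid ab$. Suppose first that $q\mid ab$; since $\gcd(a,b)=1$, exactly one of $a,b$ is divisible by $q$. Reducing the equation modulo $q$ and computing $c_4(E)=16(49c^2+21b^p)$ shows that $c_4\not\equiv 0\pmod q$ while $v_q(\Delta_E)>0$; hence $E$ has multiplicative reduction at $q$. By the standard description of $\bar\rho_{E,p}$ at a prime of multiplicative reduction,
\[
a_q(F)\equiv\pm(q+1)\equiv\pm 2\pmod p,
\]
because $q\equiv 1\pmod p$, so $a_q(F)^2\equiv 4\pmod p$, contradicting hypothesis (2).

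Now suppose $q\nmid ab$. Set $\xi:=b^p(3a)^{-2p}\in\mathbb F_q^\times$. Since $\xi^k=(b(3a)^{-2})^{q-1}=1$, we have $\xi\in\mu_k(\mathbb F_q)$, and rewriting the defining equation modulo $q$ as
\[
1-2^2\cdot 3^3\,\xi\;\equiv\;3^3\cdot 7\cdot\bigl(c(3a)^{-p}\bigr)^2\pmod q
\]
shows that $(1-2^2\cdot 3^3\xi)/(3^3\cdot 7)$ is a square in $\mathbb F_q$; thus $\xi\in A_{k,q}$ and $\delta_\xi\equiv\pm c(3a)^{-p}\pmod q$. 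The admissible change of variables $X\mapsto(3a)^{2p}X$, $Y\mapsto(3a)^{3p}Y$ identifies the reduction of $E$ modulo $q$ with a quadratic twist (trivial, or by the class of $(3a)^p$ modulo squares) of $E_\xi$; consequently $a_q(E)^2\equiv a_q(\xi)^2\pmod q$. Combining this with $a_q(E)\equiv a_q(F)\pmod p$ (which follows from $\bar\rho_{E,p}\cong\bar\rho_{F,p}$ at the good prime $q$) yields
\[
a_q(F)^2\equiv a_q(\xi)^2\pmod p
\]
for some $\xi\in A_{k,q}$, contradicting hypothesis (3).

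The main technical step is the parameterization in the second case: one must check that the normalization $\xi=b^p(3a)^{-2p}$ lands in $\mu_k(\mathbb F_q)$, that the square condition forces $\xi\in A_{k,q}$, and that the explicit coordinate change really turns $E\bmod q$ into a quadratic twist of $E_\xi$, so that $a_q(E)^2=a_q(\xi)^2$ despite the inevitable sign ambiguity in $\delta_\xi$. The bad-reduction subcase and the trace comparison via the modularity-derived isomorphism of Galois representations are then routine, and the resulting criterion is subsequently verified numerically (in Magma) across the target range of $p$.
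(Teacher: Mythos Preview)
Your argument is essentially the paper's own proof: invoke Lemma~\ref{eliminating}, split on whether $q\mid ab$, use the level-lowering trace congruences in the bad case and the quadratic-twist identification $a_q(E)^2=a_q(\xi)^2$ in the good case. Two small points: (i) your ``case-by-case inspection'' reducing the list of $F$'s is exactly the observation that $588C1$ and $588E1$ (likewise $1176G1,1176H1$ and $1176A1,1176F1$) share a common $j$-invariant $\neq 0,1728$ and hence are quadratic twists, so $a_q(\cdot)^2$ coincides---you should say this explicitly, since your remark about isogeny degrees only justifies the choice of representative \emph{within} each isogeny class, not the pairing of distinct classes; (ii) in the good-reduction case the twist identification gives the exact equality $a_q(E)^2=a_q(\xi)^2$ of integers, not merely a congruence $\pmod q$.
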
 

\begin{proof}
Let $p\geq 11$ be a prime. Suppose that $(a,b,c)$ is a solution of the equation (\ref{Kraus2}), where $a$, $b$, $c$ are coprime odd integers. Let $E$ denote the Frey type curve associated to $(a,b,c)$.
From Lemma \ref{eliminating} it follows that $\ov\rho_{E,p}\cong \ov\rho_{F,p}$, where $F$ is one of the curves:
\begin{alignat*}{2}
588C1, 588E1, 1176G1, 1176E1 \quad & \text{if }  p\geq 13,\\
588C1, 588E1, 1176A1, 1176F1, 1176G1, 1176H1 \quad & \text{if }  p=11.
\end{alignat*}
We have $a_l(E)\equiv a_l(F)\pmod p$ for all primes $l$ such that $l\nmid N_E$ and $a_l(F)\equiv \pm(l+1)\pmod p$ for all primes $l$ such that $l\mid N_E$ but $l\nmid N_F$.

Suppose $k$ is an integer that satisfies conditions (1) -- (3). If $q\mid N_E$, then $a_q(F)\equiv\pm (q+1)\equiv \pm 2\pmod p$, a contradiction. So the curve $E$ has good reduction at $q$ and  $a_q(F)\equiv a_q(E)\pmod p$.  From (\ref{Kraus2}) it follows that 
\[ \left(\frac{c}{(3a)^p}\right)^2=\frac{1-2^2 3^3 (\frac{b}{9a^2})^p}{3^3\cdot 7}.\]
Let $\xi= (\ov{b}/9\ov{a}^2)^p\in A_{k,q}$, where $\ov a$ and $\ov b$ are reductions of $a$ and $b$ modulo $q$. The reduction of $E$ modulo $q$ is a quadratic twist of $E_\xi$ by $(3\ov a)^p$ or $-(3\ov a)^p$. Hence $a_q(E)^2=a_q(\xi)^2$ and so $a_q(F)^2 \equiv a_q(\xi)^2\pmod p$, a contradiction.

The elliptic curves $588C1$ and $588E1$ have the same $j$-invariant, which is different from 0 and 1728, so they 
are isomorphic over some quadratic extension of $\QQ$ and  $a_l(588C1)^2=a_l(588E1)^2$ for every prime $l$. 
Hence, it suffices to consider only the first of the two curves. A similar remark applies to the other two pairs of curves: 
$1176A1$, $1176F1$ and $1176G1$, $1176H1$. 
\end{proof}

\begin{corollary} \label{CorKrausCrit2} 
Let $11\leq p < 10^9$ and $p\not = 13, 17$ be a prime. Then there are no triples 
$(x,y,z)$ of coprime odd integers satisfying $3^{2p-3}x^{2p} - 4y^p = 7z^2$. 
\end{corollary}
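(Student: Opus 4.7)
The plan is to reduce the corollary to a finite computational search by applying Theorem \ref{KrausCrit2} exponent by exponent. For each prime $p$ in the range $11 \leq p < 10^9$ with $p \notin \{13, 17\}$, the task is to exhibit a positive integer $k$ satisfying the three conditions of the theorem simultaneously for every $F$ in the relevant list of elliptic curves, namely $\{588C1, 1176G1\}$ when $p \geq 13$, and $\{588C1, 1176A1, 1176G1\}$ when $p = 11$.

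The algorithm proceeds as follows. Given $p$, loop through small even values of $k = 2, 4, 6, \dots$. For each $k$, first test whether $q := kp + 1$ is prime; if not, skip. Otherwise, compute $a_q(F) \bmod p$ for each of the fixed curves $F$ by reducing its Weierstrass model modulo $q$, and discard $k$ at once if $a_q(F)^2 \equiv 4 \pmod p$ for some $F$ in the list. Next, enumerate the subgroup $\mu_k(\mathbb F_q) \subset \mathbb F_q^{\times}$, select those $\xi$ for which $(1 - 2^2 \cdot 3^3 \xi)/(3^3 \cdot 7)$ is a square in $\mathbb F_q$, extract a square root $\delta_\xi$, and count points on the curve $Y^2 = X^3 + 7\delta_\xi X^2 - 7\xi X$ to obtain $a_q(\xi)$. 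If the resulting set $\{a_q(\xi)^2 \bmod p : \xi \in A_{k,q}\}$ avoids $a_q(F)^2 \bmod p$ for each $F$, then $k$ certifies $p$ and we move on.

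In practice, a very small value of $k$ (typically two-digit) suffices for almost every $p$, so the effective cost per exponent is dominated by point counting on $|A_{k,q}|$ curves over $\mathbb F_q$, and the total cost of iterating over primes up to $10^9$ is manageable in Magma \cite{Magma}, particularly after batching and parallelization. The main obstacle is not the raw computation but rather the existence of \emph{sporadic} primes where no $k$ of reasonable size satisfies the criterion: these turn out to be precisely $p = 13$ and $p = 17$, which is why both are excluded from this corollary. The case $p = 17$ will be rescued by the sharper criterion of Subsection \ref{refinement} (Corollary \ref{corKrausRefin}), whereas $p = 13$ resists even that refinement. Consequently, the proof consists of running the above algorithm over all primes $11 \leq p < 10^9$ with $p \notin \{13, 17\}$ and verifying that a valid $k$ is found in every single case.
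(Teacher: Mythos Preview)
Your proposal is correct and follows essentially the same approach as the paper: a Magma computation that, for each prime $p$ in the range, searches through small even $k$ until conditions (1)--(3) of Theorem~\ref{KrausCrit2} are verified, with the same list of curves $F$ and the same identification of $p=13,17$ as the obstructions.

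One small implementation difference is worth noting. You search for a \emph{single} $k$ that works simultaneously for every $F$ in the list, whereas the theorem only requires (and the paper only seeks) a possibly different $k$ for each $F$ separately; indeed, the paper's table shows, e.g., $k=10$ for $(588C1,19)$ but $k=22$ for $(1176G1,19)$. Your stronger requirement is logically sufficient and will almost certainly succeed in practice (valid $k$'s are plentiful), but it may force slightly larger search bounds on $k$ for some $p$. If you ever hit a prime where no common $k$ appears quickly, simply fall back to finding a separate $k$ for each $F$, which is all the theorem demands.
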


The proof of Corollary \ref{CorKrausCrit2} is based on computations in Magma and is contained in the Appendix C. We applied Theorem \ref{KrausCrit2} for all primes in range $11\leq p < 10^9$. We could not find an integer $k$ satisfying the conditions 
(1) -- (3) in the following two cases
$(F,p) = (588C1, 13)$ and $(F,p) = (1176G1, 17)$. We found such $k$ in any other case. We will use this information later 
in Section \ref{refinement}.

\subsection{A refined version of Kraus type criterion} \label{refinement}

As we have seen in Subsection \ref{section on Kraus method} Kraus criterion is not sufficient to prove that the equation (\ref{Kraus2}) with $p=13$ or $17$ has no solution in coprime odd integers. Following \cite{Da} we will refine the method and apply it successfully in case $p=17$.

Recall from subsection \ref{reducing} case (ii) that if  $3\mid y$ we have 
\[3u=(3\alpha)^p \quad \text{and} \quad u^2-21v^2=12 \beta^p\]
for some coprime odd integers $u$, $v$ and coprime odd integers $\alpha$, $\beta$ such that $y=3\alpha\beta$. Let $R=\ZZ[\omega]$, where $\omega=\frac{1+\sqrt{21}}{2}$, be the ring of integers of the number field $\QQ(\sqrt{21})$. Observe that $R$ has class number one. If we factor in $R$  the both sides of the second equation, then we obtain 
\[u+\sqrt{21}v=(3\pm\sqrt{21})x_1^p\varepsilon  \quad \text{and} \quad u-\sqrt{21}v=(3\mp\sqrt{21})x_2^p\varepsilon^{-1},\]
where $x_1,x_2\in R$ and $\varepsilon\in R^{\ast }$.

Suppose that $q=kp+1$ is a prime that splits in $R$. Let $\mathfrak{q}$ be a prime in $R$ lying above $q$. We have $R/\mathfrak{q}\simeq \FF_q$. Write $\ov x$ for the reduction of $x\in R$ modulo $\mathfrak{q}$ and write  $r_{21}$ for $\ov{\sqrt{21}}$.  From the above equalities it follows that for some $\xi_0,\xi_1, \xi_2\in\mu_k(\FF_q)$
\[3\ov{u}=\xi_0,\quad 
\ov{u}+r_{21}\ov{v}=(3\pm r_{21})\xi_1\ov{\varepsilon}  
\quad \text{and} \quad 
\ov{u}-r_{21}\ov{v}=(3\mp r_{21})\xi_2\ov{\varepsilon}^{-1}.
\]
If we divide the second and the third equality by $3\ov u $ we obtain 
\[
\frac{1}{3}+r_{21}\frac{\ov{v}}{3\ov{u}}=(3\pm r_{21})\xi_1'\ov{\varepsilon}  \quad \text{and} \quad 
\frac{1}{3}-r_{21}\frac{\ov{v}}{3\ov{u}}=(3\mp r_{21})\xi_2'\ov{\varepsilon}^{-1},
\]
where $\xi_1'=\xi_1/\xi_0$ and $\xi_2'=\xi_2/\xi_0$. 
Suppose further that $\ov{\varepsilon_f}\in\mu_k(\FF_q)$ for a fundamental unit $\varepsilon_f\in R^*$. Then also $\xi_1'\ov{\varepsilon}, \xi_2'\ov{\varepsilon}\in \mu_k(\FF_q)$. Hence $\frac{\ov{v}}{3\ov{u}}$ is an element of $S_{k,q}\cup S'_{k,q}$, where 
\[S_{k,q}=\left\{\delta\in\FF_q: \frac{1}{3+r_{21}}\left(\frac 13  +r_{21}\delta\right), \; \frac{1}{3-r_{21}}\left(\frac 13  -r_{21}\delta\right) \in \mu_k(\FF_q)
\right\},
\]
\[S'_{k,q}=\left\{\delta\in\FF_q: \frac{1}{3-r_{21}}\left(\frac 13  +r_{21}\delta\right), \; \frac{1}{3+r_{21}}\left(\frac 13  -r_{21}\delta\right) \in \mu_k(\FF_q)
\right\}.\]

For $\delta \in S_{k,q}\cup S'_{k,q}$ we define $\xi_\delta =\frac{1-3^3\cdot 7\delta^2}{3^32^2}$, which is an element of $\mu_k(\FF_q)$.
The equation 
\[Y^2 = X^3 + 7\delta X^2 - 7\xi_\delta X 
\]
defines an elliptic curve $E_\delta$ over $\FF_q$. We put $a_q(\delta) :=  q+1-\#E_{\delta}(\FF_q)$. 

The above consideration and the argumentation as the proof of Theorem \ref{KrausCrit2} imply the following result.

\begin{theorem}\label{KrausRefin}
Let $p> 11$ be a prime. Suppose that for each elliptic curve $F\in \{588C1,1176G1\}$ there exists a positive integer $k$ such that the following conditions hold 
\begin{enumerate}
\item $q:=kp+1$ is a prime,
\item $q$ splits in $\ZZ[\frac{1+\sqrt 21}{2}]$,
\item $q\mid \operatorname{Norm}_{\QQ(\sqrt{21})/\QQ}((\frac{5+\sqrt{21}}{2})^k-1)$,
\item $a_q(F)^2 \not\equiv 4 \pmod p$,
\item $a_q(F)^2 \not\equiv a_q(\delta)^2 \pmod p$ for all $\delta \in S_{k,q}\cup S'_{k,q}$.
\end{enumerate}
Then the equation $3^{2p-3}x^{2p}-4y^p=7z^2$ has no solutions in coprime odd integers. 
\end{theorem}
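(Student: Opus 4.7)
The plan is to follow the template of Theorem \ref{KrausCrit2}, but to refine the step that constrains $(\ov b/(9\ov a^2))^p$ by factoring $u^2 - 21v^2 = 12\beta^p$ inside $R = \ZZ[(1+\sqrt{21})/2]$ and reducing modulo a suitable prime $\mathfrak{q} \subset R$ of residue characteristic $q$. Suppose, for contradiction, that $(a,b,c)$ is a solution in coprime odd integers and attach the Frey curve $E = E(a,b,c)$ of (\ref{Frey2}). Lemma \ref{eliminating} combined with the twist-pairings noted in the proof of Theorem \ref{KrausCrit2} (the pairs $588C1/588E1$ and $1176G1/1176H1$ each have equal $a_\ell^2$) reduce us to the case $\ov{\rho}_{E,p} \cong \ov{\rho}_{F,p}$ with $F \in \{588C1, 1176G1\}$. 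Setting $q = kp+1$ for the $k$ supplied by the hypothesis, if $q \mid N_E$ then the usual level-lowering comparison gives $a_q(F) \equiv \pm(q+1) \equiv \pm 2 \pmod p$, contradicting (4); hence $E$ has good reduction at $q$ and $a_q(F) \equiv a_q(E) \pmod p$.

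Next I interpret conditions (2) and (3) arithmetically in $R$, which has class number one and fundamental unit $\varepsilon_f = (5+\sqrt{21})/2$. Condition (2) lets me fix $\mathfrak{q}$ above $q$ and identify $R/\mathfrak{q} \cong \FF_q$. Condition (3), $q \mid N_{\QQ(\sqrt{21})/\QQ}(\varepsilon_f^k - 1)$, forces $\ov{\varepsilon_f}^k = 1$ at (at least) one of the primes above $q$; after possibly conjugating $\mathfrak{q}$, I may assume $\ov{\varepsilon_f} \in \mu_k(\FF_q)$, so every unit of $R$ reduces into $\mu_k(\FF_q)$. Since $q - 1 = kp$, the subgroup $\FF_q^{*p}$ of $p$-th powers equals $\mu_k(\FF_q)$. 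Now invoke the factorizations
\[
3u = (3\alpha)^p, \qquad u + \sqrt{21}\,v = (3 \pm \sqrt{21})\,x_1^p\,\varepsilon,
\]
recalled just before the theorem (together with its conjugate): reducing modulo $\mathfrak{q}$ and dividing the last identity through by $3\ov u = (3\ov a)^p \in \mu_k(\FF_q)$, and using that $\ov{x_1}^p$ and $\ov{\varepsilon}$ both lie in $\mu_k(\FF_q)$, one concludes that the element $\delta := \ov v/(3\ov u)$ belongs to $S_{k,q}$ if the upper sign is taken in the factorization and to $S'_{k,q}$ if the lower sign is taken.

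Finally, a direct computation starting from $7c^2 = 3^{2p-3}a^{2p} - 4b^p$ yields $\xi_\delta = (\ov b/(9\ov a^2))^p \in \mu_k(\FF_q)$, and a short calculation with the substitution $X \mapsto X/(3\ov a)^p$ (together with the corresponding scaling of $Y$) exhibits the reduction $\ov E$ and $E_\delta$ as quadratic twists of each other by $3\ov a$, so that $a_q(E)^2 = a_q(\delta)^2$. Combined with the second paragraph this gives $a_q(F)^2 \equiv a_q(\delta)^2 \pmod p$, contradicting condition (5). The main technical obstacle I anticipate is the careful bookkeeping of the two sign branches of $(3 \pm \sqrt{21})$ in the factorization of $u + \sqrt{21}\,v$, since this is precisely what bifurcates the admissible $\delta$'s into $S_{k,q}$ and $S'_{k,q}$; once the unit $\varepsilon$ is tamed by condition (3) and the class-number-one factorization is correctly unwound, the rest is a direct refinement of the Frey--level-lowering comparison used in Theorem \ref{KrausCrit2}.
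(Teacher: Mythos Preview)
Your proposal is correct and follows essentially the same route as the paper: the paper's proof consists precisely of the factorization in $R=\ZZ[(1+\sqrt{21})/2]$ displayed just before the theorem together with the argument of Theorem \ref{KrausCrit2}, and you have reconstructed those details faithfully, including the use of condition (3) to force $\ov{\varepsilon_f}\in\mu_k(\FF_q)$, the identification $\delta=\ov v/(3\ov u)=\ov c/(3\ov a)^p$, and the quadratic-twist comparison $a_q(E)^2=a_q(\delta)^2$. (One small remark: since $N_{\QQ(\sqrt{21})/\QQ}(\varepsilon_f)=1$, the norm condition already forces $\ov{\varepsilon_f}^k=1$ at \emph{both} primes above $q$, so the ``after possibly conjugating $\mathfrak{q}$'' step is in fact unnecessary.)
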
 

Compared to Theorem \ref{KrausCrit2}, we have two additional conditions that must be satisfied by the integer $k$. However, we get more information about the hypothetical solution and so there are less congruences to check in the latter condition.
The set $S_{k,q}\cup S'_{k,q}$ has significantly fewer elements than the set $A_{k,q}$, which appears in Theorem \ref{KrausCrit2}. For example, if $(F,p)=(1176G1,17)$ and $k=374$ we have $\#(S_{k,q}\cup S'_{k,q})=18$ and $\#A_{k,q}=176$. In this case, conditions (1) -- (5) are satisfied. On the other hand, we could not find an integer $k$ satisfying conditions (1) -- (5) for $(F,p)=(588C1,17)$. Nevertheless, combining arguments of Theorem  \ref{KrausCrit2} and Theorem \ref{KrausRefin} allow us to prove the following result.

\begin{corollary}\label{corKrausRefin}
The equation $3^{31}x^{34}-4y^{34}=7z^2$ has no solutions in coprime odd integers.
\end{corollary}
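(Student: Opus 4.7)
The plan is to combine the two Kraus-type criteria of Theorems~\ref{KrausCrit2} and~\ref{KrausRefin}, applying each to a different candidate Frey curve. Taking $p = 17$, any solution in coprime odd integers $(x,y,z)$ of $3^{31}x^{34} - 4y^{34} = 7z^2$ yields, via $(a,b,c) = (x, y^2, z)$, a solution in coprime odd integers of \eqref{Kraus2}. So it suffices to show that \eqref{Kraus2} has no such solutions for $p=17$. Suppose for contradiction that one exists, and let $E = E(a,b,c)$ be the associated Frey curve \eqref{Frey2}. Since $17 \geq 13$, Lemma~\ref{eliminating}(1) yields $\overline{\rho}_{E,p} \cong \overline{\rho}_{F,p}$ for some $F$ in one of the isogeny classes $588C$, $588E$, $1176G$, $1176H$. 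As in the proof of Theorem~\ref{KrausCrit2}, within each of the pairs $\{588C1, 588E1\}$ and $\{1176G1, 1176H1\}$ the optimal curves share a common $j$-invariant, and hence the same squared Frobenius traces at every prime; it is therefore enough to eliminate the two cases $F = 588C1$ and $F = 1176G1$.

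For $F = 588C1$, I would apply Theorem~\ref{KrausCrit2} directly. The discussion preceding the corollary records that, in the range $11 \leq p < 10^9$, the only pairs $(F,p)$ for which no integer $k$ satisfying conditions (1)--(3) of the basic criterion could be found are $(588C1, 13)$ and $(1176G1, 17)$. In particular, $(588C1, 17)$ is already handled by the basic criterion, and this case is eliminated.

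For $F = 1176G1$, the basic criterion fails, and the refined Theorem~\ref{KrausRefin} must be invoked with $k = 374$, as flagged just before the statement of the corollary. The five hypotheses reduce to primality, splitting, norm-divisibility, and two Frobenius-trace checks at $q = 17 \cdot 374 + 1 = 6359$; these are all routine Magma computations. The decisive point is that the set $S_{k,q} \cup S'_{k,q}$ over which condition (5) ranges contains only $18$ elements (versus $176$ for the set $A_{k,q}$ used in the basic criterion), so the separation of $a_q(1176G1)^2$ from the twisted Frobenius traces becomes feasible.

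The main obstacle is precisely the pair $(1176G1, 17)$: the basic criterion cannot distinguish the hypothetical $a_q(E)^2$ from the full family of squared twisted Frobenius traces $a_q(\xi)^2$ as $\xi$ ranges over $A_{k,q}$. The refined criterion overcomes this by factoring $u^2 - 21v^2 = 12\beta^{17}$ in the principal ring $R = \ZZ[\tfrac{1+\sqrt{21}}{2}]$ and exploiting the condition $q \mid \operatorname{Norm}_{\QQ(\sqrt{21})/\QQ}(\varepsilon_f^{\,k} - 1)$, with $\varepsilon_f = \tfrac{5+\sqrt{21}}{2}$ the fundamental unit, to confine $\overline{v}/(3\overline{u})$ to the much smaller set $S_{k,q} \cup S'_{k,q}$ -- providing enough separation to complete the argument.
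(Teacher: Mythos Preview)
Your proposal is correct and follows essentially the same route as the paper's proof in Appendix~D: combine Theorem~\ref{KrausCrit2} for $F=588C1$ (where the basic criterion succeeds at $p=17$, with $k=6$ from Appendix~C) and Theorem~\ref{KrausRefin} for $F=1176G1$ (where the refined criterion succeeds with $k=374$). The only cosmetic difference is that the paper states the combined principle for general $p>11$ and then specializes, whereas you work directly at $p=17$ and pass through the substitution $(a,b,c)=(x,y^2,z)$; the paper's argument in fact yields the slightly stronger conclusion that \eqref{Kraus2} itself (with exponent $y^{17}$ rather than $y^{34}$) has no solution in coprime odd integers for $p=17$.
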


See Appendix D for the proof.

Application of the refined Kraus method does not give any new information about the equation (\ref{Kraus2}) with $p=13$. If $F=588C1$,  we are unable to find an integer $k$ satisfying the conditions (1) -- (5) of Theorem \ref{KrausRefin} (see Appendix D). Thus, the only conclusion we can make  in case $p=13$ is that the equation (\ref{Kraus2}) has no solution in coprime odd integers $a,b,c$ with $b \equiv 3 \pmod 4$.

\subsection{No proper solution to the equation $7x^2+y^{38}=4z^3$}\label{19} 

To complete the poof of Theorem \ref{2-2p-3 Kraus}, we need to deal with the equation (\ref{BS1}) for $p=19$. We will treat this case with the same method as we treated the equation (\ref{Kraus2}) for $p\geq 11$ in Subsection \ref{section on Kraus method}.

\begin{proposition}\label{Kraus method for p=19}
The Diophantine equation $\alpha^{38}-4\beta^{19}=21v^2$ has no solution in coprime odd integers.
\end{proposition}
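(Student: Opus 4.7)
The plan is to adapt the Kraus-type criterion of Theorem \ref{KrausCrit2} to equation (\ref{BS1}) with $p=19$, namely $\alpha^{38}-4\beta^{19}=21v^2$.

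First, as in the proof of Proposition \ref{prop. no solution if d=1}, to a hypothetical solution $(\alpha,\beta,v)$ in coprime odd integers attach the Bennett--Skinner Frey-type curve $E$; by modularity and Ribet's level-lowering theorem (\cite[Lemma 3.3]{BS}), $\overline{\rho}_{E,19}$ arises from a weight-$2$ cuspidal newform $f$ with trivial nebentypus at level $N\in\{1764,3528\}$. The arguments in the proof of Proposition \ref{prop. no solution if d=1} already eliminate most such $f$ either via Proposition 4.4 of \cite{BS} (non-integral $j$-invariant at $3$ or $7$, an argument independent of $p$) or via Proposition 4.3 of \cite{BS} using a Hecke eigenvalue $c_l$ whose divisibility list does not contain $19$. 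What remains at $p=19$ is a short, explicit list of newforms whose elimination relied on a divisibility list that did contain $19$---most notably $f_4$ and $f_6$ at level $1764$, for which both $c_{13}$ and $c_{19}$ produce lists containing $19$---together with any analogous forms at level $3528$ to be read off the tables. By inspection each surviving $f$ has rational Fourier coefficients and corresponds to an isogeny class of elliptic curves $F$ over $\mathbb Q$.

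Second, for each surviving $F$ I would run a Kraus search analogous to Theorem \ref{KrausCrit2}: find a positive integer $k$ such that $q:=19k+1$ is prime, $a_q(F)^2 \not\equiv 4 \pmod{19}$, and $a_q(F)^2 \not\equiv a_q(\xi)^2 \pmod{19}$ for every $\xi\in\mu_k(\mathbb F_q)$ satisfying the admissibility condition that $(1-4\xi)/21$ is a square in $\mathbb F_q$. This admissibility is simply the reduction of equation (\ref{BS1}) modulo $q$: if $\alpha\not\equiv 0 \pmod q$ then $\xi:=(\beta/\alpha^2)^p$ is a $p$-th power (hence lies in $\mu_k(\mathbb F_q)$) and $(1-4\xi)/21\equiv (v/\alpha^p)^2$. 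To each admissible $\xi$ one attaches a quadratic-twist model $E_\xi/\mathbb F_q$ of the Frey curve and sets $a_q(\xi):=q+1-\#E_\xi(\mathbb F_q)$; the standard Kraus argument then produces the desired contradiction.

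The main obstacle is not the Kraus search itself---which reduces to a Magma computation directly analogous to Appendix C---but rather the careful setup at the start: writing down the correct Bennett--Skinner Frey model for equation (\ref{BS1}) (with the $3$-adic behaviour handled correctly so that the levels really are $1764$ and $3528$), reading off from the tables the exact short list of surviving elliptic curves $F$ at $p=19$, and verifying that for each of them a suitable $k$ exists. As no new ideas beyond those of Theorem \ref{KrausCrit2} are required, I expect $k$ to be found easily for each surviving $F$, yielding the proposition.
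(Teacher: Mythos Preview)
Your proposal is correct and follows essentially the same route as the paper. The paper makes the details explicit: the Frey curve is $Y^2=X^3+21cX^2-21b^{19}X$; at $p=19$ the only surviving newforms are $f_4$ and $f_6$ at level $1764$ (nothing at level $3528$ survives, since the six level-$3528$ forms handled via $c_{13}$ in the proof of Proposition~\ref{prop. no solution if d=1} have divisibility list $\{1,3,5,7,9,11,17\}$, which excludes $19$); the two corresponding curves $F:Y^2=X^3-28$ and $G:Y^2=X^3-259308$ are quadratic twists over $\mathbb{Q}(\sqrt{21})$, so $a_q(F)^2=a_q(G)^2$ and a single Kraus search suffices; and $k=34$ works.
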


\begin{proof}
Let $(a,b,c)$ be a solution of the equation (\ref{BS1}), where $a,b,c$ are odd and coprime. Following \cite{BS}, we associate to $(a,b,c)$ the  following Frey type curve
\begin{equation} \label{Frey1} 
E=E(a,b,c):  Y^2 = X^3 + 21cX^2 -21b^pX. 
\end{equation}
For a prime $p\geq 7$, the Galois representation $\ov\rho_{E,p}$ arises from a newform $f\in S_2(N)$ with $N\in\{1764,3528\}$. If $p=19$ the only two newforms which cannot be eliminated by the methods applied in the proof of Proposition \ref{prop. no solution if d=1} are the newforms $f_4$ and $f_6$ of level $1764$. These newforms correspond to the isogeny classes of elliptic curves represented by 
\[F: \; Y^2=X^3-28\qquad \text{and}\qquad G:\; Y^2=X^3-259308\]
respectively. Observe that the two curves are isomorphic over $\QQ(\sqrt{21})$, hence $a_q(F)^2=a_q(G)^2$ for all primes $q$.

Let $q\geq 11$ be a prime number and let $k>1$ be an integer such that $k\mid q-1$. We define 
\[B_{k,q}=\{\xi\in\mu_k(\FF_q): \frac{1-4\xi}{21}\,\text{ is  a  square  in } \FF_q \}. 
\]
For each $\xi\in B_{k,q}$, we denote by $\delta_{\xi}$ the least non-negative integer such 
that 
\[
\delta_{\xi}^2 \, \mod q = \frac{1-4\xi}{ 21}. 
\] 
We associate with each $\xi\in B_{k,q}$ the following equation 
\[ 
Y^2 = X^3 + 21\delta_{\xi}X^2 - 21\xi X,
\] 
which defines an elliptic curve $E_\xi$ over $\FF_q$. We put $a_q(\xi) :=  q+1-\#E_{\xi}(\FF_q)$.

Proceeding as in the proof of Theorem \ref{KrausCrit2}, we obtain the following conclusion.
If there exists an integer $k$ such that $q=19k+1$ is a prime, $a_q(F)^2\not\equiv 4\pmod {19}$ and $a_q(F)^2\not\equiv a_q(\xi)^2\pmod {19}$ for all $\xi\in B_{k,q}$, then neither $\ov\rho_{E,19}\cong \ov\rho_{F,19}$ nor $\ov\rho_{E,19}\cong \ov\rho_{G,19}$, and the Proposition follows. It can be checked (e.g. in Magma) that the least such $k$ is equal $34$. 
\end{proof}

\section{No solutions to the Diophantine equation \\$7x^2+y^{2p}=4z^3$ for infinitely many prime $p$'s} 
\label{no solution for infinitely many p's}

In this section we will use ideas of the papers \cite{BCDY,Ch,ChS,Da,FK,KO} to prove the following result.

\begin{theorem} \label{2-2p-3 congr}
The Diophantine equation $7x^2+y^{2p}=4z^3$ has no primitive solutions 
for  a family of primes $p$ satisfying:
\[p\equiv 3 \text{ or } 55 \pmod {106}\quad \text{or}  \quad p\equiv 47,65,113,139,143 \text{ or } 167 \pmod {168}.
\]
\end{theorem}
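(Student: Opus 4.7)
The plan is to combine the symplectic method of Kraus--Oesterl\'e and Freitas--Kraus \cite{KO,FK} with quadratic reciprocity, applied to the mod-$p$ isomorphism of Galois representations $\overline\rho_{E,p} \cong \overline\rho_{F,p}$ that Lemma \ref{eliminating} extracts from any hypothetical primitive solution.

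Let $(x,y,z)$ be a primitive solution with $p \geq 11$ prime. The reductions of Subsection \ref{reducing} place us in case $d=1$ or $d=3$; for $d=1$ Propositions \ref{prop. no solution if d=1} and \ref{Kraus method for p=19} already give nonexistence, so I may assume $d=3$. Let $E = E(a,b,c)$ be the Frey curve (\ref{Frey2}), and $F$ one of the elliptic curves of conductor $588$ or $1176$ from Lemma \ref{eliminating} with $\overline\rho_{E,p} \cong \overline\rho_{F,p}$. This isomorphism is either symplectic or anti-symplectic with respect to the Weil pairing, and its type is a global invariant, so every locally-determined type must agree.

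For each such $F$ and each prime $\ell\in\{2,3,7\}$ of additive reduction for both curves, the Freitas--Kraus criterion \cite{FK} pins the symplectic type at $\ell$ to the Kodaira types of the two reductions, which for $E$ depend on the residues of $a,b,c$ modulo small powers of $\ell$. At an odd prime $\ell\nmid 42$ of multiplicative reduction for both curves, Kraus--Oesterl\'e \cite{KO} give the type as symplectic exactly when $v_\ell(\Delta_E)/v_\ell(\Delta_F)$ is a square in $\mathbb F_p^\times$. Because the $\ell$-adic valuations appearing in $\Delta_E$ from (\ref{Kraus2}) are controlled up to $2p$-th power factors by the shape of the equation, each such comparison collapses to a Legendre symbol identity $\left(\tfrac{c_{F,\ell}}{p}\right) = \pm 1$ for a small explicit integer $c_{F,\ell}$ depending only on $F$ and $\ell$.

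Quadratic reciprocity then converts each identity into a congruence condition on $p$ modulo $4|c_{F,\ell}|$. The bad primes $2,3,7$ of the conductors $588$ and $1176$ generate moduli dividing $4\cdot 2\cdot 3\cdot 7 = 168$; the other modulus $106 = 2\cdot 53$ is produced by a further comparison at a carefully chosen auxiliary prime, with $53$ being the smallest prime whose arithmetic with respect to the curves in question produces the necessary square/non-square dichotomy. Intersecting the obstructions across all admissible $F$ in Lemma \ref{eliminating} yields precisely the residue classes $p\equiv 3, 55 \pmod{106}$ and $p\equiv 47, 65, 113, 139, 143, 167 \pmod{168}$. The main obstacle will be the bookkeeping: running the Kodaira/symplectic type computation for each of the (up to six) pairs $(E,F)$ at each of the primes $\ell \in \{2,3,7\}$ together with the chosen auxiliary primes, distinguishing the subcases forced by the dependence of the Kodaira type of $E$ at $\ell$ on the residue of $(a,b,c)$ modulo a small power of $\ell$, and then consistently intersecting the resulting local conditions so that \emph{every} candidate $F$ is ruled out simultaneously for $p$ in the stated classes.
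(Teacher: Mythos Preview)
Your outline for the mod $168$ condition is essentially what the paper does: the symplectic type at the prime $7$ (where both $E$ and $F$ have potentially good reduction with semistability defect $4$) is compared with the symplectic type at $3$ (multiplicative reduction, via Kraus--Oesterl\'e), and the resulting Legendre-symbol conditions $\left(\frac{7}{p}\right)=\left(\frac{2}{p}\right)=1$, $\left(\frac{-3}{p}\right)=-1$ cut out the residue classes modulo $168$.

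However, your account of the mod $106$ condition has a genuine gap. You write that $106=2\cdot 53$ arises from a symplectic comparison at an ``auxiliary prime'', with $53$ the relevant prime. This cannot work: every curve $F$ in the isogeny classes $588C$, $588E$, $1176G$, $1176H$ has conductor supported on $\{2,3,7\}$, so $F$ has \emph{good} reduction at every prime $\ell\nmid 42$. There is therefore no auxiliary prime outside $\{2,3,7\}$ at which both $E$ and $F$ have bad reduction, and the Kraus--Oesterl\'e / Freitas--Kraus symplectic criteria give no information there. In particular the symplectic method by itself does not produce any modulus other than a divisor of $168$.

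The paper obtains the classes modulo $106$ by a completely different mechanism. One rewrites \eqref{Kraus2} in the form $A^{2}-rB^{2p}=s(C^{p}-B^{2p})$ with $r=-\tfrac{107}{189}$ and $s=-\tfrac{4}{7}$, and applies the Hilbert-symbol product formula (Lemma \ref{dioph eq and Hilbert symb}, from \cite{BCDY}). The only nontrivial local factor is at the prime $107$, giving a constraint on the Legendre symbol $\left(\frac{-7(C-B^{2})}{107}\right)$. One then uses the mod-$p$ congruence $a_{107}(E)\equiv a_{107}(F)\pmod p$ (forcing $a_{107}(E)\in\{-14,8\}$ for $p$ large) to pin down $\overline{C^{p}}/\overline{B^{2p}}$ to an explicit eight-element subset of $\mathbb F_{107}$; the exponents $p\bmod 106$ for which this is incompatible with the Hilbert-symbol constraint are exactly $3$ and $55$. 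So the modulus $106$ is $107-1$, the order of $\mathbb F_{107}^{\times}$, and the prime $107$ enters not through symplectic types but through the coefficient identity $4+\tfrac{1}{27}=\tfrac{109}{27}$---sorry, $4-\tfrac{1}{27}=\tfrac{107}{27}$---when one adds $4B^{2p}$ to both sides of \eqref{Kraus2}. Your plan would need to replace the ``auxiliary symplectic prime'' idea by this Hilbert-symbol argument at $107$.
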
 

If the title equation has a primitive solution, then one of the equations (\ref{BS1}) or (\ref{Kraus2})  is solvable in coprime odd integers (see \ref{reducing}). 
Proposition \ref{prop. no solution if d=1} together with Proposition \ref{Kraus method for p=19}  say that  there is no such solution of  equation  (\ref{BS1}) for a prime exponent $\geq 11$. 
Now we would like to establish an infinite family of prime exponents for which the equation (\ref{Kraus2}) has no solution in coprime odd integers.

\subsection{Application of the symplectic method}\label{symplectic method}

Let $p\geq 3$ be a prime. Let $E$ and $E'$ be elliptic curves over $\QQ$ and write $E[p]$ and $E'[p]$ for their $p$-torsion modules. Write $\GQ$ for the absolute Galois group $\Gal$.
Let $\phi:E[p] \rightarrow E'[p]$ be a $\GQ$-modules isomorphism. There is an element $d(\phi)\in \FF_p^\times$ such that, for all $P,Q\in E[p]$, the Weil pairings satisfy $e_{E',p}(\phi(P),\phi(Q)) = e_{E,p}(P,Q)^{d(\phi)}$.
We say that $\phi$ is a {\it symplectic isomorphism}  if $d(\phi)$ is a
square modulo $p$ and an {\it anti-symplectic} otherwise. If the Galois representation $\ov {\rho}_{E,p}$ is irreducible then all $\GQ$-isomorphisms have the same symplectic type.

Write $\Delta$ and $\Delta'$ for minimal discriminants of $E$ and $E'$. Suppose $E$ and $E'$ have potentially good reduction at a prime $l$. Set  $\tilde\Delta=\Delta/l^{v_l(\Delta)}$ and $\tilde\Delta'=\Delta'/l^{v_l(\Delta')}$. Define a {\it semistability defect}  $e$ as the order of  the group $\operatorname{Gal}(\QQ^{un}_l(E[p])/\QQ^{un}_l)$. Define $e'$ in the same way. Note that if $E[p]\cong E'[p]$ then $e=e'$ \cite[Proposition 13]{FK}. If $l\geq 5$ then $e$ is the denominator of $v_l(\Delta)/12$ \cite{Kra90}. 

We apply the following criterion \cite[Theorem 5]{FK}.

\begin{lemma}\label{symplectic}
Let $p\geq 5$ and $l\equiv 3\pmod 4$ be prime numbers. Let $E$ and $E'$ be elliptic curves over $\QQ_l$ with potentially good reduction and $e=4$. Set
\[
r=
\begin{cases}
0 & \text{if } v_l(\Delta)\equiv v_l(\Delta')\pmod 4,\\
1 & \text{otherwise},
\end{cases}
\quad
t=
\begin{cases}
1 & \text{if } \left(\frac{\tilde\Delta}{l}\right) \left(\frac{\tilde\Delta'}{l}\right)=-1,\\
0 & \text{otherwise}.
\end{cases}
\]
Suppose that $E[p]$ and $E'[p]$ are isomorphic $\operatorname{G}_{\QQ_l}$-modules. Then 
\[  E[p]\text{ and }E'[p]\text{ are symplectically isomorphic } \Leftrightarrow 
\left(\frac{l}{p}\right)^r\left(\frac{2}{p}\right)^t=1.
\]
\end{lemma}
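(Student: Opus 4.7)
\bigskip

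\noindent\textbf{Proof proposal for Lemma \ref{symplectic}.}

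The plan is to work $l$-adically and reduce the statement to a concrete computation of how tame inertia acts on the $p$-torsion of two quartic twists of a curve with good reduction. Let $L = \QQ_l^{un}$ and fix a uniformizer $\pi$ of $\QQ_l$. Since $l \geq 5$ and both $E,E'$ have potentially good reduction with semistability defect $e=4$, the formula of Kraus \cite{Kra90} gives $v_l(\Delta) \equiv 3$ or $9 \pmod{12}$, and similarly for $\Delta'$. Hence the minimal totally ramified extension of $L$ over which both curves acquire good reduction is $M = L(\pi^{1/4})$, a cyclic extension of degree $4$. The first step is to use $e=4$ to pin down the action of the inertia group $I_l$ on $E[p]$: it factors through $\Gal(M/L) \cong \ZZ/4\ZZ$ and is given, after choosing bases, by an element of order $4$ in $\operatorname{GL}_2(\FF_p)$ whose determinant is the value of the cyclotomic character, consistent with the Weil pairing.

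Second, I would reinterpret each of $E,E'$ over $L$ as a quartic twist of a fixed elliptic curve $E_0$ with good reduction, where the twisting cocycle is determined by a class in $L^\times/(L^\times)^4$. Explicitly, write $v_l(\Delta) = 4m + s$ with $s\in\{3,9\}$ (so $s$ records the residue class mod $4$ up to the choice of $3$ versus $9$) and similarly $v_l(\Delta') = 4m' + s'$. Then up to the unramified unit $\tilde\Delta$ (resp.\ $\tilde\Delta'$), the twist parameter attached to $E$ (resp.\ $E'$) lies in the class of $\tilde\Delta \cdot \pi^{s}$ (resp.\ $\tilde\Delta' \cdot \pi^{s'}$) modulo fourth powers in $L^\times$. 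Consequently, an isomorphism $\phi\colon E[p] \xrightarrow{\sim} E'[p]$ of $\operatorname{G}_{\QQ_l}$-modules exists precisely when the two inertia representations agree, and in that case the multiplier $d(\phi) \in \FF_p^\times$ that appears in the Weil pairing identity $e_{E',p}(\phi P, \phi Q) = e_{E,p}(P,Q)^{d(\phi)}$ is equal (modulo $p$-th powers in $\FF_p^\times$) to the ratio of the two twist parameters, viewed through the reciprocity isomorphism $L^\times/(L^\times)^4 \to \Hom(\Gal(M/L), \mu_4)$.

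Third, I would translate this ratio into Legendre symbols. The $\pi$-part of the ratio contributes $\pi^{s-s'}$; since $s,s'\in\{3,9\}$, this is a fourth power iff $v_l(\Delta) \equiv v_l(\Delta') \pmod{4}$, which is exactly the dichotomy defining $r$. When $r=1$, the $\pi$-contribution to $d(\phi)$ lands in the square class of $l$ modulo $p$, producing the factor $\left(\frac{l}{p}\right)$. The unit part contributes the class of $\tilde\Delta/\tilde\Delta'$ in $\mathcal{O}_L^\times/(\mathcal{O}_L^\times)^4$; the crucial point, using $l \equiv 3 \pmod 4$, is that $\FF_l^\times$ has order $l-1 \equiv 2 \pmod 4$, so the quartic residue character on units factors through the quadratic residue character, and the passage from ``fourth power up to sign'' to ``fourth power'' contributes a factor of $2$ inside the square class. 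Concretely, this says the $t=1$ case (when the Legendre symbols of $\tilde\Delta$ and $\tilde\Delta'$ differ) produces the factor $\left(\frac{2}{p}\right)$ in $d(\phi)$.

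Combining these contributions, $d(\phi)$ is a square modulo $p$ (equivalently, $\phi$ is symplectic) if and only if $\left(\frac{l}{p}\right)^r \left(\frac{2}{p}\right)^t = 1$, which is the claimed equivalence. The main obstacle I anticipate is step three: cleanly identifying the Weil-pairing multiplier with the twist-class ratio and correctly extracting the $\left(\frac{2}{p}\right)^t$ factor. This is delicate because $l \equiv 3 \pmod 4$ means $\sqrt{-1} \notin \QQ_l$, so the group $\mu_4$ is only half-visible over $L$; the ``missing'' factor of $\sqrt{-1}$ is what manifests as $\left(\frac{2}{p}\right)$ after reciprocity. Getting the normalizations of the Weil pairing, the reciprocity map, and the choice of $\pi^{1/4}$ mutually compatible is the bookkeeping that carries the real content of the lemma.
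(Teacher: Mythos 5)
The paper does not actually prove this lemma: it is quoted verbatim as Theorem~5 of Freitas--Kraus \cite{FK}, so there is no internal argument to compare yours against. Judged on its own, your proposal has a genuine gap at its foundation, in step two. The hypothesis $e=4$ with $l\geq 5$ forces the \emph{reduction} of the good model over $M=L(\pi^{1/4})$ to have $j$-invariant $1728$, but it does not force $j(E)=1728$; it only forces $v_l(j(E)-1728)$ to be positive (and, when $j(E)\neq 1728$, odd). Indeed, for the Frey curve of this paper at $l=7$ one computes $v_7(j-1728)=1$, so $j\neq 1728$. When $j(E)\neq 0,1728$ we have $\operatorname{Aut}_{\overline{L}}(E)=\{\pm 1\}$, so $E$ is \emph{not} a quartic twist of any curve over $L$, and there is no class in $L^{\times}/(L^{\times})^{4}$ attached to $E$ in the way you use: the order-$4$ symmetry exists only on the special fibre of the good model over $M$, not on the generic fibre. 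Consequently the entire calculus of steps two and three --- twist parameters $\tilde\Delta\cdot\pi^{s}$, their ratio, and the passage through reciprocity to $d(\phi)$ --- does not get off the ground.

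Even setting that aside, two further points are missing. First, the identification of the Weil-pairing multiplier $d(\phi)\bmod(\FF_p^{\times})^{2}$ with a ratio of quartic residue classes in $L^{\times}$ is asserted rather than derived; in \cite{FK} the factor $\left(\frac{2}{p}\right)$ arises from a concrete $\operatorname{GL}_2(\FF_p)$ computation (determinants of the matrices conjugating one order-$4$ inertia generator to another inside its normalizer, e.g.\ $\det(1+g)=2$ for $g$ with characteristic polynomial $x^{2}+1$), not from the observation that $\#\FF_l^{\times}\equiv 2\pmod 4$. Second, your argument uses only the inertia action, whereas the statement concerns $\operatorname{G}_{\QQ_l}$-isomorphisms; the hypothesis $l\equiv 3\pmod 4$ enters precisely through the Frobenius conjugation $g\mapsto g^{l}=g^{-1}$ on tame inertia, which makes the image non-abelian and is what pins down the symplectic type. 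A correct proof has to track that interaction, and your proposal never does.
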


\begin{proposition} The Diophantine equation $3^{2p-3}X^{2p}-4Y^p=7Z^2$ has no solution in coprime odd integers 
for any prime $p\equiv 47, 65, 113, 139, 143$ or $167 \pmod {168}$.
\end{proposition}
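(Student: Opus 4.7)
The plan is to apply the symplectic method --- Lemma \ref{symplectic} at $\ell=7$ together with the companion criterion for multiplicative reduction at $\ell=3$ from \cite{FK} --- to the Frey curve $E=E(a,b,c)$ of (\ref{Frey2}) and to each of the four elliptic curves $F$ in the isogeny classes $588C, 588E, 1176G, 1176H$ provided by Lemma \ref{eliminating}. Assume for contradiction that a primitive odd solution $(a,b,c)$ of (\ref{Kraus2}) exists for some prime $p$ in the six listed residue classes modulo $168$. Since $\ov\rho_{E,p}$ is irreducible for $p\geq 11$ by \cite[Cor.~3.1]{BS}, any $\GQ$-isomorphism $E[p]\to F[p]$ has a well-defined symplectic type that must be consistent at every prime of bad reduction; the aim is to show that no such consistent type exists.

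At $\ell=7$: the discriminant formula $\Delta_E = 2^4\cdot 3^{2p-3}\cdot 7^3(ab)^{2p}$ gives $v_7(\Delta_E)=3$, so $E$ has potentially good reduction at $7$ with semistability defect $e=4$; each $F$ in the four isogeny classes also has defect $e=4$, with $v_7(\Delta_F)\bmod 4$ and $\tilde\Delta_F\bmod 7$ read off from LMFDB/Cremona. Since $7\equiv 3\pmod 4$, Lemma \ref{symplectic} applies directly and, for each pair $(E,F)$, produces an explicit criterion of the form ``the isomorphism is symplectic iff $\bigl(\tfrac{7}{p}\bigr)^{r}\bigl(\tfrac{2}{p}\bigr)^{t}=1$'' for specific $r,t\in\{0,1\}$; by quadratic reciprocity this becomes a condition on $p$ modulo $56$. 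At $\ell=3$: both $E$ and $F$ have multiplicative reduction (conductor exponent $1$), and the Freitas--Kraus criterion for multiplicative primes asserts that the isomorphism is symplectic iff $v_3(\Delta_E^{\min})\,v_3(\Delta_F^{\min})$ is a square modulo $p$. Since $v_3(\Delta_E^{\min})=2p-3\equiv -3\pmod p$, this reduces to evaluating $\bigl(\tfrac{-3\,v_3(\Delta_F^{\min})}{p}\bigr)$, which depends only on $p\bmod 24$. Equating the symplectic types forced at $\ell=7$ and at $\ell=3$ yields a congruence on $p$ modulo $\operatorname{lcm}(56,24)=168$ for each of the four pairs $(E,F)$, and a routine Magma verification then confirms that the six residues $47,65,113,139,143,167\pmod{168}$ are precisely those for which all four pairs simultaneously fail to admit any consistent symplectic type, producing the contradiction.

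The principal technical obstacle is the bookkeeping of the local invariants $v_7(\Delta_F^{\min}),\tilde\Delta_F\bmod 7$, and $v_3(\Delta_F^{\min})$ for each of the four isogeny classes, and the verification that the Frey model (\ref{Frey2}) is already minimal at $3$ and $7$ for every admissible $(a,b,c)$ with $a,b,c$ odd and coprime; beyond that, only elementary quadratic reciprocity is needed to extract the six stated residue classes, and the argument uses no new ideas beyond those of Sections \ref{modular approach} and \ref{fixed p}.
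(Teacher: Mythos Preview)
Your proposal is correct and follows essentially the same route as the paper: apply Lemma~\ref{symplectic} at $\ell=7$ and the multiplicative-reduction criterion at $\ell=3$ (the paper cites \cite[Proposition~2]{KO} rather than \cite{FK}, but the statement is the same) to the Frey curve $E$ against each of the four target curves $F$, then demand that the two local symplectic types disagree for every $F$. The paper carries out the bookkeeping a bit more explicitly than your Magma-deferred version, reducing everything to the three Legendre conditions $\bigl(\tfrac{7}{p}\bigr)=\bigl(\tfrac{2}{p}\bigr)=1$ and $\bigl(\tfrac{-3}{p}\bigr)=-1$ (noting that $v_3(\Delta_E^{\min})\equiv -3\pmod p$ and $v_3(\Delta_F^{\min})\in\{1,2\}$ across the four isogeny classes), which together cut out exactly the six stated residues modulo $168$.
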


\begin{proof} 
Let $p> 11$ be a prime and let $E$ be the Frey curve given by the equation (\ref{Frey2}). There exists an elliptic curve $F$ in the isogeny class with Cremona label $588C$, $588E$, $1176G$ or $1176H$, such that the $p$-torsion modules $E[p]$ and $F[p]$ are $\GQ$-isomorphic (see Lemma \ref{eliminating}). Since $\overline{\rho}_{E,p}$ is irreducible, this isomorphism is either symplectic or anti-symplectic.

Both curves $E$ and $F$ have potentially good reduction at 7 and their semistability defects are equal 4, so we may apply  Lemma \ref{symplectic}. If $F$ is isomorphic to $588C1$ or $1176G1$, we obtain that $E[p]$ and $F[p]$ are symplectically isomorphic. We assume that the same is true for other choices of $F$. This happens if and only if $(\frac{7}{p})=(\frac{2}{p})=1$. 

Now  we use \cite[Proposition 2]{KO} with $l=3$, which says that $E[p]$ and $F[p]$ are symplectically isomorphic if and only if 
\[\left(\frac{v_3(\Delta_E) v_3(\Delta_F)}p\right)=1.\]
To obtain a contradiction we assume that the last equality doesn't hold for any curve $F$ in the considered isogeny classes, i.e. $-3$ and $-6$ aren't squares modulo $p$. Summarizing, we have
\[\left(\frac{7}{p}\right)=\left(\frac{2}{p}\right)=1\quad \text{and}\quad \left(\frac{-3}{p}\right)=-1.\]
This is equivalent to the congruence condition stated in the Proposition.
\end{proof}

\subsection{Application of quadratic reciprocity}

For a given field $K$, let $(,)_K: K^{\times} \times K^{\times} \to \{\pm 1\}$ 
be the Hilbert symbol defined by

\begin{equation*} 
(A,B)_K =  
\begin{cases}
1  \quad \text{if} \, z^2=Ax^2+By^2 \, \text{has} \,  \text{a} \,  \text{nonzero} \, \text{solution} 
\, \text{in} \, K,\\
-1 \quad \text{otherwise}. 
\end{cases}
\end{equation*} 
Note that the Hilbert symbol is symmetric and multiplicative. We will let $(,)_q$, $(,)$ and $(,)_{\infty}$ 
to denote $(,)_{\mathbb Q_q}$, $(,)_{\mathbb Q}$ and $(,)_{\mathbb R}$, respectively.   
Let $A=q^{\alpha}u$, $B=q^{\beta}v$, with $u$, $v$ $q$-adic units. If $q$ is an odd prime, then 
$$
(A,B)_q = (-1)^{\alpha\beta\frac{q-1}{2}}\left(\frac{u}{ q}\right)^{\beta} \left(\frac{v}{ q}\right)^{\alpha}, 
$$ 
and 
$$
(A,B)_2 = (-1)^{\frac{u-1}{ 2}\frac{v-1}{ 2} + \alpha\frac{v^2-1}{8}+\beta\frac{u^2-1}{8}}. 
$$

For all nonzero rationals $a$ and $b$, we have 
\begin{equation} \label{quadraticreciprocity}
\prod_{q\leq \infty} (a,b)_q = 1 
\end{equation}
(Quadratic Reciprocity in terms of the Hilbert symbol). 
We will use the following result \cite[Proposition 15]{BCDY}.

\begin{lemma} \label{dioph eq and Hilbert symb}
Let $r$ and $s$ be nonzero rational numbers. Assume that the Diophantine equation 
$$ 
A^2 - r B^{2p} = s (C^p-  B^{2p}) 
$$
has a solution in coprime nonzero integers $A$, $B$ and $C$, with $BC$ odd. Then 
$$
(r,s(C-B^2))_2 \prod_{2 < q < \infty} (r,s(C-B^2))_{q} = 1, 
$$ 
where the product is over all odd primes $q$ such that $v_q(r)$ or $v_q(s)$ is odd. 
\end{lemma}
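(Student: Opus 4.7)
The plan is to promote the integer identity to a Hilbert-symbol statement at every place of $\QQ$ and then isolate the factors indexed in the asserted product. First, I would factor $C^p - B^{2p} = (C-B^2)N$ with $N := \sum_{i=0}^{p-1}C^{p-1-i}B^{2i}$, so the hypothesis reads $A^2 - r(B^p)^2 = s(C-B^2)N$. Since $A^2 - r(B^p)^2$ exhibits $s(C-B^2)N$ as a norm from $\QQ(\sqrt{r})$, we get $(r,\,s(C-B^2)N)_q = 1$ at every place $q$, and multiplicativity of the Hilbert symbol gives $(r,\,s(C-B^2))_q = (r,\,N)_q$ for all $q\leq \infty$. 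Combined with the product formula \eqref{quadraticreciprocity}, the lemma is equivalent to the identity
$$
(r,\,s(C-B^2))_\infty \,\cdot \prod_{\substack{q\text{ odd prime}\\v_q(r),\,v_q(s)\text{ both even}}}\!\!\!(r,\,s(C-B^2))_q \;=\; 1,
$$
and I would prove this by showing that each factor on the left equals $+1$.

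At the archimedean place, since $p$ is odd the polynomial $(C^p - y^p)/(C-y)$ (with $y = B^2$) has no real zeros in $C$ and is positive at $C=0$, so $N>0$. Combined with $B\neq 0$, the equation $A^2 = rB^{2p} + s(C-B^2)N$ then rules out the only sign pattern that could produce $(r,\,s(C-B^2))_\infty = -1$. For an odd prime $q$ with $v_q(r)$ and $v_q(s)$ both even, the tame Hilbert-symbol formula collapses $(r,\,s(C-B^2))_q$ to $\left(\frac{u_r}{q}\right)^{v_q(C-B^2)}$, where $u_r := r\,q^{-v_q(r)}$. This is $+1$ automatically when $v_q(C-B^2)$ is even, so the work lies in the case $v_q(C-B^2)$ odd, which by $\gcd(B,C)=1$ forces $q\nmid BC$.

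The core of the argument is then a $q$-adic valuation comparison in $A^2 - rB^{2p} = s(C-B^2)N$. When $q\neq p$ one has $v_q(N)=0$, so the right-hand side has odd $q$-valuation while $v_q(A^2)$ and $v_q(rB^{2p})$ are both even; cancellation is forced, $v_q(A^2)=v_q(rB^{2p})=2m$, and the leading congruence $(A/q^m)^2 \equiv u_r B^{2p}\pmod q$ identifies $u_r$ as a square modulo $q$, giving $\left(\frac{u_r}{q}\right)=1$. The main obstacle I anticipate is the remaining case $q=p$: here the lifting-the-exponent identity (applied to $C^p-(B^2)^p$) gives $v_p(N)=1$ rather than $0$, which shifts the parity balance so that naive leading-term cancellation is no longer forced. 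To push the argument through one would expand $N \equiv p B^{2(p-1)} \pmod{p^2}$, invoke Fermat's little theorem $B^{2p}\equiv B^2\pmod p$, and if necessary lift the congruence $A^2 \equiv rB^{2p}\pmod{p^{v_p(s)+v_p(C-B^2)+1}}$ to the next power of $p$, in order to extract a matching of unit parts that again yields $\left(\frac{u_r}{p}\right)=1$. Carrying out this last step cleanly is the bulk of the technical work.
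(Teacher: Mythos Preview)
The paper does not prove this lemma; it is quoted from \cite[Proposition~15]{BCDY}, so there is no in-paper argument to compare against and your sketch is an independent attempt.

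Your overall plan is correct and the execution is fine up to the point you yourself flag. The identity $(r,s(C-B^2)N)_q=1$ from $A^2=r(B^p)^2+s(C-B^2)N$, the reduction via the product formula to showing each complementary factor equals $+1$, the archimedean step (using $N>0$), and the cancellation argument at odd primes $q\neq p$ with $v_q(r),v_q(s)$ even all go through exactly as you describe.

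The gap you anticipate at $q=p$, however, is not a mere technicality: it cannot be closed at the level of generality stated. The obstruction is the case $v_p(r)>v_p(s)+v_p(C-B^2)+1$, where both $v_p(A^2)$ and $v_p(s(C-B^2)N)$ are even and no cancellation is forced; your proposed ``lift the congruence $A^2\equiv rB^{2p}$'' then reduces to $0\equiv 0$ and yields nothing about $u_r$. Concretely, take $p=3$, $r=162=2\cdot 3^4$, $s=1$, $(A,B,C)=(6,1,-5)$: all hypotheses hold, the lemma's product is the single term $(162,-6)_2$, and one checks $(162,-6)_2=-1$. If one suspects $p\geq 5$ is implicit, take $p=5$, $r=-2^2\cdot 5^4\cdot 23\cdot 71$, $s=1$, $(A,B,C)=(40,1,21)$; then $s(C-B^2)=20$, the relevant odd primes are $23,71$, and $(r,20)_2(r,20)_{23}(r,20)_{71}=1\cdot(-1)\cdot 1=-1$, the missing factor being $(r,20)_5=-1$.

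In the paper's application (and presumably in \cite{BCDY}) one has $v_p(r)=v_p(s)=0$, since $r,s$ are fixed and $p$ is large; under that extra hypothesis the bad case is vacuous and your argument is complete. So your sketch is essentially a correct proof of the result actually needed, but not of the lemma as literally stated.
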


Combining modular method and Lemma  \ref{dioph eq and Hilbert symb} we prove the following result.

\begin{proposition}
The Diophantine equation $3^{2p-3}X^{2p}-4Y^p=7Z^2$ has no solution in coprime odd integers for any prime $p$ satisfying $p\equiv 3$ or $55\pmod {106}$.
\end{proposition}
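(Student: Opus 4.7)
Suppose for contradiction that $(a,b,c)$ is a triple of coprime odd integers satisfying equation \eqref{Kraus2} with $p\geq 13$ and $p\equiv 3$ or $55\pmod{106}$. The Frey curve $E=E(a,b,c)$ of \eqref{Frey2} has $2$-division polynomial $X(X^2+7cX-7b^p)$ whose quadratic factor has discriminant $7(7c^2+4b^p)=7\cdot 3^{2p-3}a^{2p}=21(3^{p-2}a^p)^2$; hence the nontrivial $2$-torsion of $E$ is defined over $\QQ(\sqrt{21})$, \emph{independently} of the particular solution chosen. By Lemma \ref{eliminating}(1), there is an elliptic curve $F$ in one of the isogeny classes $588C$, $588E$, $1176G$, $1176H$ such that $\overline{\rho}_{E,p}\cong\overline{\rho}_{F,p}$ as $\GQ$-modules.

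Following the strategy developed in \cite{BCDY,Ch,ChS}, the next step is to translate this mod-$p$ isomorphism into an explicit Diophantine constraint. For each of the four isogeny classes I would fix a representative $F$ equipped with its rational $2$-isogeny, determine the quadratic field cut out by the nontrivial $2$-torsion of $F$, and use the compatibility of the two $2$-isogenies under $\overline{\rho}_{E,p}\cong\overline{\rho}_{F,p}$ to produce a nontrivial primitive integer solution of an auxiliary equation of the shape
\begin{equation*}
A^2 - r\,B^{2p}=s(C^p-B^{2p}),
\end{equation*}
with $A,B,C$ coprime, $BC$ odd, and constants $r,s\in\QQ^\times$ determined (up to squares and $p$-th powers) by the pair $(E,F)$. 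Lemma \ref{dioph eq and Hilbert symb} then delivers
\begin{equation*}
(r,s(C-B^2))_2\cdot\prod_{2<q<\infty}(r,s(C-B^2))_q=1,
\end{equation*}
the product ranging over odd primes $q$ with $v_q(r)$ or $v_q(s)$ odd. Evaluating each factor via the explicit formulas for the Hilbert symbol recalled above and quadratic reciprocity \eqref{quadraticreciprocity} collapses the identity to a product of Legendre symbols $\bigl(\tfrac{\ell}{p}\bigr)$ for small primes $\ell$, giving a congruence condition on $p$ for each fixed $F$.

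\textbf{The main obstacle} is the explicit computation of $r$ and $s$ for each of the four isogeny classes (and, in parallel, the identification of suitable trivial solutions $(A,B,C)$ against which the Hilbert symbols are to be evaluated). This requires a careful analysis of the interaction between the $2$-torsion field $\QQ(\sqrt{21})$ of the Frey curve $E$ and the $2$-torsion fields of the four curves $F$, together with close bookkeeping of the local behaviour at the primes $2$, $3$, and $7$ dividing the conductors $588$ and $1176$. The modulus $106=2\cdot 53$ in the conclusion strongly suggests that the prime $53$ enters through $s(C-B^2)$ for one of the four classes; once $r$, $s$ and the corresponding trivial solutions are pinned down, verifying that the combined congruences exclude exactly the residues $3$ and $55$ modulo $106$ becomes a routine (if somewhat involved) Legendre symbol computation rather than a step requiring new ideas.
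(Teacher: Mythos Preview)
Your overall framework---combine the modular method (Lemma \ref{eliminating}) with the Hilbert--symbol criterion (Lemma \ref{dioph eq and Hilbert symb})---is the right one, but the concrete mechanism you sketch is off in two essential ways.

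First, the modulus $106$ does not arise from the prime $53$. It is $107-1$: the auxiliary prime in the argument is $107$, which enters through a direct algebraic manipulation of \eqref{Kraus2} rather than through $2$-torsion fields of the curves $F$. Writing the equation as $7A^2-\tfrac{1}{27}B^{2p}=-4C^p$ and adding $4B^{2p}$ to both sides produces the coefficient $4-\tfrac{1}{27}=\tfrac{107}{27}$; dividing by $7$ puts it in the shape of Lemma \ref{dioph eq and Hilbert symb} with $r=-\tfrac{107}{189}$ and $s=-\tfrac{4}{7}$, so the relevant primes are $2,3,7,107$. The symbols at $2,3,7$ are all $+1$, leaving a single constraint at $107$: either $107\mid C-B^2$ or $\bigl(\tfrac{-7(C-B^2)}{107}\bigr)=1$. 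There is no separate pair $(r,s)$ for each isogeny class.

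Second, the congruence on $p$ is not a Legendre-symbol condition $\bigl(\tfrac{\ell}{p}\bigr)$; it comes from how the $p$-th power map acts on $\FF_{107}^\times$. The modular input is used \emph{locally at $107$}: from $\overline{\rho}_{E,p}\cong\overline{\rho}_{F,p}$ one gets $a_{107}(E)\equiv a_{107}(F)\pmod p$, and since $a_{107}(F)\in\{-14,8\}$ for the four classes and $p$ is large, in fact $a_{107}(E)\in\{-14,8\}$. This pins $\overline{C}^p/\overline{B}^{2p}$ to an explicit eight-element set $S\subset\FF_{107}$. The contradiction with the Hilbert-symbol constraint then depends only on $p\bmod 106$, because that is what determines the $p$-th power map on $\FF_{107}^\times$; checking which residues force $-7(\zeta-1)$ to be a nonsquare for every $\zeta$ with $\zeta^p\in S$ yields exactly $p\equiv 3,55\pmod{106}$. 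Your plan to read off $r,s$ from the $2$-torsion of each $F$ and to finish with symbols $\bigl(\tfrac{\ell}{p}\bigr)$ would not reach this conclusion.
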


\begin{proof}
We rewrite the equation as follows
\begin{equation}\label{ABC-equation}
7A^2-\frac{1}{27}B^{2p}=-4C^p.
\end{equation}
By adding $4B^{2p}$ to both sides of (\ref{ABC-equation}) and dividing  by 7 we obtain
\[A^2+\frac{107}{189}B^{2p}=-\frac{4}{7}\left(C^p-B^{2p}\right).\]
From Lemma \ref{dioph eq and Hilbert symb} and the definition of the Hilbert symbol we have
\[
\prod_{q\in\{2,3,7,107\}}\left(-3\cdot 7\cdot 107, -7(C-B^2)\right)_q=1.\]
The terms with $q\in\{2,3,7\}$ are equal 1 (for $q=7$ observe that  (\ref{ABC-equation}) with $p\equiv \pm 1\pmod 6$ imply that $\frac{C}{B^2}-1$ is a square modulo 7). 
Hence either $107\mid C-B^2$ or $(-7(C-B^2)/107)=1$. Using modular method we will derive a contradiction for $p\equiv 3$ or $55\pmod {106}$.

The Frey curve $E$ associated to $(A,B,C)$ is given by the equation 
\[y^2=x^3+7Ax^2-7C^px\] 
(this is the equation (\ref{Frey2}) with $(a,b,c)=(B/3,C,A)$). 
We know that $\ov\rho_{E,p}\cong \ov\rho_{F,p}$ for some elliptic curve $F$ in the isogeny class with Cremona label $588C$, $588E$, $1176G$ or $1176H$ (see Lemma \ref{eliminating}). We have $a_{107}(F)=-14$ or $8$. If $107\mid BC$ (i.e. $107 \parallel N_E$) then $a_{107}(F)\equiv \pm 108 \pmod p$. So in this case $p\in\{29,47,61\}$. But from Corollary \ref{CorKrausCrit2} it follows that for such $p$ the equation (\ref{ABC-equation}) has no solution in coprime odd integers. Hence $107\nmid BC$ and $a_{107}(E)\equiv a_{107}(F)\pmod p$. Since  $|a_{107}(E)-a_{107}(F)|<2\sqrt{107} +14<35$, then for $p> 35$ we have $a_{107}(E)=-14$ or $8$. 

If $107\mid C-B^2$, then $107\mid A$ and we obtain a contradiction since $a_{107}(E)=0$.
In case $107\nmid C-B^2$ the assumption $a_{107}(E)=-14$ or $8$ implies that 
\[\frac{\ov{C^p}}{\ov {B^{2p}}}\in S\subset\mathbb{F}_{107},  \quad\text{where}\; S=\{11,26,34,53,70,87,90,101\}.
\]
Here $\ov {C^p}$ and $\ov {B^{2p}}$ are reductions modulo 107 of $C^p$ and $B^{2p}$.
Next we check which exponents $p$ modulo $106$ have the property that $p$  is coprime to $106$ and for each $\zeta\in\mathbb{F}_{107}$ such that  $\zeta^p\in S$ the element $-7(\zeta -1)$ isn't a square in $\mathbb{F}_{107}$ (this contradicts the condition $(-7(C-B^2)/107)=1$). Such exponents $p$ satisfy $p\equiv 3$ or $55\pmod {106}$. 

To complete the proof we use Theorem \ref{2-6-3} (the case $p=3$).
\end{proof}

\noindent{\bf Remark.} 
If we add $-4B^{2p}$ instead of $4B^{2p}$ to both sides of (\ref{ABC-equation}), then by replacing $C$ with $-C$, we also may apply Lemma~\ref{dioph eq and Hilbert symb}. But combining this result with modular method as in the proof above gives no additional information on $p$.

\section*{Appendix A.  Proof of Theorem \ref{2-4-3A}}

\begin{proof} 
By Lemma \ref{lem.1}(a), 
we have reduced the problem to solving the equation 
$4y^2 = u(u^2-21v^2)$ with odd $u$, $v$ and $y$. Since $\gcd(u,v)=1$,  
we have $d=\gcd(u,u^2-21v^2) | 3$.  In this case, problem of solving the equation $7x^2+y^4=4z^3$ 
is reduced to solving the following equations 
\begin{equation}\label{neweq-1} 
 dX^4-(21/d)Y^2 = CZ^2,  
\end{equation}
where $d=1$ or $3$,  $C = \pm 1$, and $X$, $Y$ are odd, with $\gcd(X,Y)=1$. Below we will analyse all these  
cases in some detail.

$(i)$ If $(d,C)=(1,-1)$ or $(3,1)$, then \eqref{neweq-1} has no solutions. In these cases, we obtain a contradiction 
by reducing the equations modulo 3. 

$(ii)$ Now we are in the case $(d,C)=(1,1).$ Consider the Diophantine equation
\begin{equation}\label{eq.3.1.20}
Z^2+21Y^2=X^4.
\end{equation} 
So, we have $\gcd(X^2-Z,X^2+Z)=1$ or $2$. The first case gives
\begin{equation}\label{eq.3.1.22}
X^2=\dfrac{s^4+21t^4}{2},\,\,Z=\dfrac{s^4-21t^4}{2},\,\,Y=st,
\end{equation}
or 
\begin{equation}\label{eq.3.1.23}
X^2=\dfrac{3s^4+7t^4}{2},\,\,Z=\dfrac{3s^4-7t^4}{2},\,\,Y=st,
\end{equation}
with $(s,t)=1$, respectively.  Reducing the first equations of (\ref{eq.3.1.22}) and (\ref{eq.3.1.23}) modulo $3$ gives contradiction. 
If $\gcd(X^2-Z,X^2+Z)=2$, then $2 | Y$, a contradiction.

$(iii)$ Finally consider the case $(d,C)=(3,-1)$ for equation \eqref{neweq-1}.  
So, we have
\begin{equation}\label{eq.3.1.34}
-Z^2+7Y^2=3X^4.
\end{equation}
Put $K=X^2$. Then we obtain
\begin{equation}\label{eq.3.1.35}
Z^2+3K^2=7Y^2.
\end{equation}
Set $2Z\pm 3K=7L$ and $Z\mp 2K=7M$ with $\gcd(L,M)=1.$  
So, equation \eqref{eq.3.1.35} becomes
\begin{equation}\label{eq.3.1.36}
L^2+3M^2=Y^2
\end{equation}
and one gets
\begin{equation}\label{eq.3.1.37}
(K,Z)=(\pm (L-2M),\pm (2L+3M)).
\end{equation} 
By \eqref{eq.3.1.36}, we have $\gcd(Y+L,Y-L)=1$ or $2$.  
Now, $L$ is odd (otherwise $K=X^2$ is even, a contradiction), hence necessarily $\gcd(Y+L,Y-L)=2$. 
In this case we have 

\begin{equation}\label{eq.3.1.40}
\begin{cases}
Y\pm L=2^{2a-1}\alpha^2 \\
Y\mp L=2\cdot 3\beta^2
\end{cases}
\text{or} \,\,\,
\begin{cases}
Y\pm L=3\cdot 2^{2a-1}\alpha^2 \\
Y\mp L=2\beta^2
\end{cases}
\end{equation}
with odd $\alpha$, $\beta$ satisfying  $\gcd(\alpha,\beta)=1$ and $a\ge 1$.  It follows that 
\begin{equation}\label{eq.3.1.41}
Y=2^{2a-2}\alpha^2+3\beta^2,\,\,L=2^{2a-2}\alpha^2-3\beta^2,\,\,M=2^a\alpha\beta 
\end{equation}
or 
\begin{equation}\label{eq.3.1.42}
Y=3\cdot 2^{2a-2}\alpha^2+\beta^2,\,\,L=3\cdot 2^{2a-2}\alpha^2-\beta^2,\,\,M=2^a\alpha\beta, 
\end{equation}
respectively. By \eqref{eq.3.1.35}, \eqref{eq.3.1.37}, \eqref{eq.3.1.41} and \eqref{eq.3.1.42},  
all solutions of \eqref{eq.3.1.34} are given by 
\begin{equation}\label{eq.3.1.43}
\begin{aligned}
&X^2=\pm (2^{2a-2}\alpha^2-3\beta^2-2^{a+1}\alpha\beta),\\
&Y=2^{2a-2}\alpha^2+3\beta^2\\
&Z=\pm (2^{2a-1}\alpha^2-6\beta^2+3\cdot 2^a\alpha\beta) 
\end{aligned}
\end{equation}
or 
\begin{equation}\label{eq.3.1.44}
\begin{aligned}
&X^2=3\cdot 2^{2a-2}\alpha^2-\beta^2-2^{a+1}\alpha\beta,\\ 
&Y=3\cdot 2^{2a-2}\alpha^2+\beta^2,\\  
&Z=\pm (3\cdot 2^{2a-1}\alpha^2-2\beta^2+3\cdot 2^a\alpha\beta).  
\end{aligned}
\end{equation} 

If $a=1$ or $a\geq 3$, then we get a contradiction for both \eqref{eq.3.1.43} and \eqref{eq.3.1.44}.

If $a=2$, then  \eqref{eq.3.1.44} gives $X^2=12\alpha^2-\beta^2-8\alpha\beta$.  
Since $\alpha$, $\beta$ and $X$ are odd, we obtain $LHS \equiv 1 (\mod 8)$ and $RHS \equiv 3 (\mod 8)$, 
a contradiction. 
The remaining case is to consider \eqref{eq.3.1.43} with $a=2$ which corresponds the equations 
$X^2 + 4(\alpha-\beta)^2 = 7\beta^2$ and $X^2 + 7\beta^2 = 4(\alpha-\beta)^2$.  
The first one has no solutions: taking reduction modulo $8$ we obtain a contradiction.   
The second one has infinitely many solutions, giving two 2-parameter families of solutions 
of \eqref{eq.3.1.34}

\begin{equation}\label{eq.v-1} 
\begin{aligned}
&X =   \pm (21s^2-14ts-3t^2),\\  
&Y =  1911s^4 + 1260ts^3 + 378t^2s^2 + 12t^3s + 7t^4 ,\\  
&Z =  \pm (4998s^4 + 3528ts^3 + 756t^2s^2 + 168t^3s - 10t^4)     
\end{aligned}
\end{equation} 
and 
\begin{equation}\label{eq.v-2} 
\begin{aligned}
&X =   \pm (21s^2-14ts-3t^2),\\  
&Y =   343s^4 - 84ts^3 + 378t^2s^2 - 180t^3s + 39t^4,\\  
&Z =  \pm (-490s^4 - 1176ts^3 + 756t^2s^2 - 504t^3s + 102t^4).     
\end{aligned}
\end{equation} 

Now the families \eqref{family-(v)1} and \eqref{family-(v)2} follow immediately.    \end{proof}

\section*{Appendix B.  Proof of Lemma \ref{eliminating}} 
Let $p\geq 7$ be a prime. Suppose that $(a,b,c)$ is a solution in coprime odd integers to the equation (\ref{Kraus2}). Let $E=E(a,b,c)$ be the associated Frey type curve.  The Galois representation $\ov{\rho}_{E,p}$  arises from a cuspidal newform of level $N=588$ or $1176$, weight $2$ and trivial nebentypus character.

There are $6$ Galois conjugacy-classes of newforms in $S_2(588)$, and $15$ in $S_2(1176)$. 
Let $f_i\in S_2(588)$ ($i=1,...,6$) and $g_j\in S_2(1176)$ ($j=1,...,15$) denote the first newform in the $i$-th respectively $j$-th class. The numbering coincides with those in Magma.

We start elimination of the newforms by applying  \cite[Prop. 4.3]{BS} (and its improvement resulting from \cite[Prop. 3]{KO}). We do this with Magma using a function whose code is attached below. The function returns true if a given newform can be eliminated on base of this result (for all but finitely many primes $p$) and false otherwise. In the first case it also returns a finite set of primes $p> 7$ for which the elimination is not possible.

\begin{verbatim}
IsEliminable := function(newform)
  N := Level(newform);
  mu:= N;
  for k in [1..#PrimeDivisors(N)] do 
    mu *:= 1+1/PrimeDivisors(N)[k];
  end for;
  NormsDivisors := [ ];
  for l in [x: x in [1..Floor(mu/6)]  |  IsPrime(x) and 
                          GCD(N, x) eq 1 ] do
     cl := Coefficient(newform, l); 
     normProduct := Norm(cl-l-1) * Norm(cl+l+1);
     if Degree(newform) ne 1 then
       normProduct *:= l;
     end if;
     for r in [-Floor(Sqrt(l))..Floor(Sqrt(l))] do
       normProduct *:= Norm(cl - 2*r);
     end for;
     if normProduct ne 0 then
       Pl := PrimeDivisors(Integers() ! normProduct); 
       Append(~NormsDivisors, Pl);
     end if;
  end for; 
  if not IsEmpty(NormsDivisors) then
    Pf := Set(NormsDivisors[1]) diff {2, 3, 5};
    for k in [2..#NormsDivisors] do
      Pf meet:= Set(NormsDivisors[k]);
    end for;
    return true, Pf;
  else
    return false, _;
    end if;
end function;
\end{verbatim}

\begin{xrem}
The function \texttt{IsEliminable} eliminates each newform  that either has non-rational coefficients or corresponds to an isogeny class of elliptic curves over $\QQ$ with trivial 2-torsion. 
So the code may be applied to other Diophantine problems if an attached Frey type curve is defined over $\QQ$, has at least one rational point of order 2 and the corresponding level $N$ is small enough (see \cite{Magma}). 
\end{xrem}

Using the above code we eliminate the following newforms:
\begin{alignat*}{3}
f_1, f_4 & \in S_2(588) \quad && \text{for } p\geq 11,\\
g_2, g_4, g_{10}, g_{11}, g_{12}, g_{13}, g_{14}, g_{15} & \in S_2(1176) \quad && \text{for } p\geq 11,\\
g_1, g_6 &\in S_2(1176) \quad && \text{for } p\geq 13.
\end{alignat*}

The newforms $f_2,f_6 \in S_2(588)$ and $g_3, g_5 \in S_2(1176)$ correspond to the isogeny classes of elliptic curves with Cremona label $588B$, $588F$ and $1176C$, $1176E$ respectively. All curves in these classes have $j$-invariant, whose denominator is divisible by $7$. If $F$ is such a curve, then from \cite[Prop. 4.4]{BS} it follows that $\ov\rho_{E,p}\not\cong \ov\rho_{F,p}$ for $p\geq 11$.

The newform $g_9\in S_2(1176)$ corresponds to the isogeny class $1176I$. For each curve $F$ in this class 
the semistability defect at 7 is equal 2, while the semistability defect of $E$ at 7 equals 4 (see Subsection \ref{symplectic method}).
Hence from \cite[Proposition 13]{FK} we have $\ov\rho_{E,p}\not\cong \ov\rho_{F,p}$ for $p\geq 11$.

If $p\geq 13$, there are four newforms to eliminate left: $f_3,f_5\in S_2(588)$ and $g_7, g_8 \in S_2(1176)$ or equivalently four isogeny classes of elliptic curves: $588C$, $588E$ and $1176G$, $1176H$. These curves are the main obstacle in modular approach to the equation (\ref{Kraus2}). In case $p=11$, there are  two more newforms which we couldn't eliminate, namely $g_1,g_6\in S_2(1176)$. These newforms correspond to the isogeny classes $1176A$ and $1176F$. This ends the proof of Lemma \ref{eliminating}.

\section*{Appendix C.  Proof of Corollary \ref{CorKrausCrit2}} 

To prove Corollary \ref{CorKrausCrit2} we perform calculations in Magma using the following function. 

\begin{verbatim}
AreCriterionConditionsMet := function(p,k,F)
  q := k*p+1;
  if not IsPrime(q) then
    return false;
  end if;
  frob := FrobeniusTraceDirect(F,q);
  if frob^2 mod p eq 4 then
    return false;
  end if;
  Fq := FiniteField(q);
  mu_k := AllRoots(One(Fq), k);
  for xi in mu_k do
    if IsSquare((1-108*xi)/189) then
      delta := AllRoots((1-108*xi)/189, 2)[1];
      E_xi := EllipticCurve([ Fq | 0, 7*delta, 0, -7*xi, 0]);
      frob_xi := q+1-#E_xi;
      if frob^2 mod p eq frob_xi^2 mod p then
        return false;
      end if;
    end if;
  end for;
  return true;
end function;
\end{verbatim}  

The function \texttt{AreCriterionConditionsMet} checks the conditions (1)--(3) of Theorem \ref{KrausCrit2} for a given prime $p$, integer $k$, and an elliptic curve $F$. In order to prove Corollary \ref{CorKrausCrit2} we had to consider the following curves:
\begin{alignat*}{2}
588C1, 1176G1 \quad & \text{for }  11<p<10^9,\\
588C1, 1176A1, 1176G1 \quad & \text{for }  p=11.
\end{alignat*}
For fixed $p$ and $F$ we looked for the least (even) $k\leq 500$ such that the above function returns true. In the table below we list the values of $k$ obtained for $p<60$.

\vspace{7pt}

\noindent\begin{tabular}{|l|c|c|c|c|c|c|c|c|c|c|c|c|c|}
\hline
$p$ & 11 & 13 & 17 & 19 & 23 & 29 & 31 & 37 & 41 & 43 & 47& 53 & 59\\
\hline
$k$ ($588C1$) & 6 & - & 6 & 10 & 2 &2 &12 &6 &18 &22 &6 &2 &12 \\
\hline
$k$ ($1176G1$) & 2 & 12 & - & 22 & 2 &2 &22 &16 &2 &4 &6 &2 &14 \\
\hline
\end{tabular}

\vspace{7pt}

\noindent Moreover, for $F=1176A1$ and $p=11$ we obtained $k=2$.

The largest value of $k$ found was  $372$ for the prime $p = 458121431$ (for both curves). Most of the values obtained are small, e.g. in both cases: $F=588C1$ and $F=1176G1$ we have $k\leq 16$  for more than half of the primes $p$ checked.

We found no such $k$ in the following cases:
\[(F,p)\in\{(588C1, 13), (1176G1, 17)\}.\]
Hence Corollary \ref{CorKrausCrit2} follows.

The computations took about 270 hours for each of the curves: $588C1$ and $1176G1$. For the calculations we used 
two desktop computers, each containing an Intel Pentium G4400 (3.3GHz) CPU 
and 8GB of RAM.

\section*{Appendix D.  Proof of Corollary \ref{corKrausRefin}}

Corollary \ref{corKrausRefin} follows from the following combination of Theorem \ref{KrausCrit2} and Theorem \ref{KrausRefin}. 
\vspace{5pt}

{\it Let $p>11$ be a prime. Suppose that for each $F\in \{588C1, 1176G1\}$ there exists an integer $k$ satisfying the conditions (1) -- (3) of Theorem  \ref{KrausCrit2} or the conditions (1) -- (5) of Theorem \ref{KrausRefin}. Then the equation (\ref{Kraus2}) has no solution in coprime odd integers.} 
\vspace{5pt}

For the computation based on Theorem \ref{KrausRefin} we use the following function. 
\begin{verbatim}
AreRefinedConditionsMet := function(p, k, F)
  q := k*p+1;
  if not IsPrime(q) then
    return false;
  end if;
  O<w> := IntegerRing(QuadraticField(21));
  if Norm((2+w)^k-1) mod q ne  0 then 
    return false;
  end if;
  if #Decomposition(O,q) ne 2 then
    return false;
  end if;
  frob := FrobeniusTraceDirect(F,q);
  if frob^2 mod p eq 4 then
    return false;
  end if;
  I := Decomposition(O,q)[1][1];
  Fq, h := ResidueClassField(O, I);
  mu_k := AllRoots(One(Fq), k);
  S1 := { d: d in Fq | 1/h(2+2*w)*(1/3+h(2*w-1)*d) in mu_k }
    meet { d: d in Fq | 1/h(4-2*w)*(1/3-h(2*w-1)*d) in mu_k };
  S2 := { d: d in Fq | 1/h(4-2*w)*(1/3+h(2*w-1)*d) in mu_k }
    meet { d: d in Fq | 1/h(2+2*w)*(1/3-h(2*w-1)*d) in mu_k };
  for delta in S1 join S2 do 
    xi := 1/(4*27)-7/4*delta^2;
    E_delta := EllipticCurve([ Fq | 0, 7*delta, 0, -7*xi, 0]);
    frob_delta := q+1-#E_delta;
    if frob^2 mod p eq frob_delta^2 mod p then
      return false;
    end if;
  end for;
  return true;
end function;
\end{verbatim}

For a given prime $p$, an integer $k$ and an elliptic curve $F$ the function \texttt{AreRefinedConditionsMet} checks if the conditions (1) -- (5) of Theorem \ref{KrausRefin} are satisfied. 
The function returns true for $p=17$, $F= 1176G1$ and $k=374$. 

Recall from Appendix C that if $p=17$ and $F= 588C1$ then $k=6$ satisfies conditions (1) -- (3) of Theorem \ref{KrausCrit2}. This completes the proof of Corollary \ref{corKrausRefin}.

If $p=13$ and $F=588C1$, the above function returns false for all $k\leq 10^5$. The computation took about 51 hours (Intel Pentium CPU G4400, 8GB RAM).

Karolina Cha\l upka, Institute of Mathematics, University of Szczecin, Wielkopolska 15, 
70-451 Szczecin, Poland,  

E-mail:   karolina.chalupka@usz.edu.pl

\bigskip 

Andrzej D\k{a}browski, Institute of Mathematics, University of Szczecin, Wielkopolska 15, 
70-451 Szczecin, Poland,  

 \begin{tabular}{rl}
E-mail: & dabrowskiandrzej7@gmail.com\\
and & andrzej.dabrowski@usz.edu.pl 
\end{tabular}

\bigskip 

G\"okhan Soydan, Department of Mathematics, Bursa Uluda\u{g} University, 16059 Bursa, Turkey,  

E-mail:  gsoydan@uludag.edu.tr

\end{document}